\newtheorem{theorem}{Theorem}[section]
\newtheorem{lemma}{Lemma}[section]
\newtheorem{Prop}{Proposition}[section]
\newtheorem{remark}{Remark}[section]
\newcommand {\bbr}{\mathbb{R}}
\newcommand {\R}{\mathbb{R}}
\newcommand {\la}{\lambda}
\newcommand {\ga}{\gamma}
\newcommand{\gam}{\gamma}
\newcommand {\al}{\alpha}
\newcommand {\del}{\delta}
\newcommand {\too}{|x|^{-2}}
\newcommand {\w}{\widehat}
\newcommand {\ep} {\varepsilon}
\newcommand {\pa}{\partial}
\newcommand {\TR}{[0,T]\times \bbr^n}
\newcommand \tx[2] {L^{#1}_tL^{#2}_x}
\newcommand \pair  {\tx{\frac {4(n-1)}{n}}{\frac {2(n-1)}{n-2}}}
\newcommand \stx[2] {L_t^{{#1}}H_{x}^{s,{#2}}([0,T]\times \bbr^n)}
\newcommand {\intr}{\int_{\Bbb{R}^n}}
\title[$L^2$-critical Hartree equations]
{Global well-posedness for the $L^2$-critical Hartree  equation on $\bbr^n$, $n\ge 3$.}
\author[M.Chae]{Myeongju Chae}
\address{Department of Applied Mathematics, Hankyong National University, Ansong 456-749, Korea}
 \email{mchae@hknu.ac.kr}
 \author[S.Kwon]{Soonsik Kwon}
 \address{Department of Mathematics, Princeton University, Princeton, New Jersey, USA}
 \email{soonsikk@math.princeton.edu}
\date{}
\thanks{}
\thanks{} \subjclass[2000]{35Q55} \keywords{global well-posedness; Hartree equation; I-method; almost conservation law}
\begin{document}
\begin{abstract}
We consider the initial value problem for the $L^2$-critical defocusing Hartree equation in $\bbr^n$, $n\ge 3$. We show that the problem is globally well posed in $H^s(\bbr^n)$ when $ 1>s> \frac{2(n-2)}{3n-4}$. We use the ``I-method" following
\cite{C-K-S} combined with a local in time Morawetz estimate for the smoothed out solution $I\phi$ as in \cite{CGT1}.
\end{abstract}
\maketitle

\section{\bf{Introduction}}
In this paper we study the initial value problem of the $L^2$-critical defocusing Hartree
equation,
\begin{align}\label{SP}
\begin{cases}
i\partial_t \phi +\frac12 \Delta \phi = (\too\ast |\phi|^2)\phi, \quad x \in \Bbb R^n,~t>0, \\
\phi(x, 0) = \phi_0(x)\in H^s(\bbr^n).
\end{cases}
\end{align}
Here $H^s(\bbr^n)$ denotes the usual inhomogeneous Sobolev space. \eqref{SP} is meaningful in dimension $n\geq 3$,
 where the Hartree potential is locally integrable. The Hartree type
equations arise in atomic and nuclear physics and is related to the ¡°mean-field theory¡± with respect to wave functions describing boson systems. (\cite{EY}, \cite{Sp}) \\
The local well-posedness results for $s\geq 0$ is shown by the Strichartz estimates similarly
as polynomial type NLS. For $s>0$ \eqref{SP} is locally well-posed in the subcritical sense.
More precisely, for any $\phi_0\in H^s(\R^n)$, the lifetime span of the solution depends on
 the norm of the initial data, $\|\phi_0\|_{H^s}$. Whereas, for $s=0$ the lifetime span depends on the profile of the initial data as well.
 The classical solutions to \eqref{SP} enjoy the mass conservation law,
\[ \|\phi(\cdot,t)\|_{L^2(\bbr^n)} = \|\phi_0(\cdot)\|_{L^2(\bbr^n)},\]
and the energy conservation law,
\begin{equation} \label{energy}
E[t]:= \int_{\bbr^n} |\nabla \phi|^2 +(|x|^{-2}\ast |\phi|^2)|\phi|^2 dx.
\end{equation}
When $s\geq1$, the energy conservation law \eqref{energy} together with the subcritical local theory immediately yields the global well-posedness. But when $0\leq s<1$, where the energy could be infinite, the mass conservation law cannot imply the global well-posedness, since in the local theory for  $L^2$ initial data, the lifetime $T=T(\phi_0)$ could go to zero for a fixed $L^2$ norm. The purpose of this paper is to extend the global well-posedness result below the energy norm. Our main theorem is as follows:
\begin{theorem}\label{T}
 Let $n \geq 3$. The initial value problem of \eqref{SP} is
globally well-posed for initial data $\phi_0\in H^s(\Bbb R^n)$ when $\frac{2(n-2)}{3n-4}< s <1$.
\end{theorem}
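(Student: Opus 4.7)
\medskip

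\noindent\textbf{Proof plan.} The plan is to implement the I-method of Colliander--Keel--Staffilani--Takaoka--Tao, adapted to the Hartree nonlinearity and enhanced by a local-in-time Morawetz estimate for the smoothed solution $I\phi$ in the spirit of \cite{CGT1}. First I would introduce the smoothing Fourier multiplier $I=I_N$ with symbol $m(\xi)$ that equals $1$ for $|\xi|\le N$ and behaves like $(N/|\xi|)^{1-s}$ for $|\xi|\ge 2N$, so that $I\colon H^s\to H^1$ with $\|I\phi\|_{H^1}\lesssim N^{1-s}\|\phi\|_{H^s}$. Because the equation is $L^2$-critical, I can use the scaling $\phi\mapsto \lambda^{n/2}\phi(\lambda x,\lambda^2 t)$ (which preserves the $L^2$ norm and the form of the equation) to choose $\lambda=\lambda(N,\|\phi_0\|_{H^s})$ so that the rescaled initial datum has $E[I\phi_0^\lambda]\le \tfrac12$. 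Global existence in the original variables for an arbitrarily large time $T$ then reduces to controlling $E[I\phi^\lambda(t)]$ on the rescaled interval $[0,\lambda^2 T]$.

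Next I would apply $I$ to \eqref{SP} and compute $\tfrac{d}{dt}E[I\phi]$. Since $I$ does not commute with the Hartree nonlinearity, the time derivative produces a commutator term of the form $\mathrm{Re}\int \overline{\partial_t I\phi}\,[I,(\too\ast|\phi|^2)]\phi$, which via Plancherel is a multilinear expression involving the multiplier defect $1-m(\xi_1+\xi_2+\xi_3+\xi_4)\prod m(\xi_j)^{-1}$ on the frequency hyperplane. The key analytic step is an almost conservation law of the form
\begin{equation*}
\bigl|E[I\phi(t_2)]-E[I\phi(t_1)]\bigr|\lesssim N^{-\alpha}\bigl(1+\|I\phi\|_{X}^{\beta}\bigr),
\end{equation*}
on any interval where a suitable local well-posedness norm $X$ of $I\phi$ (built from $X^{s,b}$ or Strichartz spaces) is $O(1)$; here $\alpha$ is the frequency-decay exponent harvested from the multiplier defect by a careful case analysis based on where the highest frequency sits. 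The nonlocal kernel $|x|^{-2}$ is handled via its Fourier representation as $c|\xi|^{-(n-2)}$, which fits naturally into the multilinear frequency analysis.

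The role of the Morawetz ingredient is to replace the naive iteration of the almost conservation law by a sharper a priori spacetime bound on $I\phi$. Following the Hartree analog of \cite{CGT1}, I would derive a local Morawetz-type inequality for $I\phi$ yielding control of a norm like $\|I\phi\|_{L^2_t L^{q}_x}$ on each unit-time subinterval in terms of $E[I\phi]$ and of a small commutator error coming from $[I,\;\cdot\;]$ acting on the Morawetz identity; this error is again estimated by an inverse power of $N$. Summing over subintervals of $[0,\lambda^2 T]$, one converts the $L^2_t$ decay into a bound on the number of subintervals on which the local theory must be reiterated, which improves the exponent in the iteration of the almost conservation law.

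Finally I would combine these ingredients: the number of good subintervals is $\lesssim \lambda^2 T\cdot(\text{Morawetz bound})$, and on each one the modified energy grows by at most $N^{-\alpha}$. Requiring the total growth to stay below $\tfrac12$ forces a relation between $N$, $\lambda$ and $T$ that, after undoing the scaling, is compatible with $T\to\infty$ precisely when $s>\tfrac{2(n-2)}{3n-4}$. I expect the main obstacle to be proving the almost conservation law with the sharp exponent $\alpha$: one must extract the best possible decay from the multiplier defect against the Hartree kernel $|\xi|^{-(n-2)}$, and this delicate multilinear estimate, together with the correctly weighted Morawetz estimate for $I\phi$, is what determines the threshold $\tfrac{2(n-2)}{3n-4}$.
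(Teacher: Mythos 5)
Your proposal reproduces the paper's strategy faithfully: the same $I_N$ multiplier, the same $L^2$-criticality scaling to normalize $E[I\phi_0^\lambda]$, the same frequency-side almost conservation law extracting $O(N^{-1+})$ energy increments from the multiplier defect, the same local-in-time interaction Morawetz estimate for $I\phi$ (with a commutator error bounded by an inverse power of $N$) to control the number of iteration subintervals, and the same final bookkeeping in $N,\lambda,T$ yielding the threshold $s>\frac{2(n-2)}{3n-4}$. This is essentially the paper's own proof, correctly sketched.
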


We use the $I$-method and the interaction Morawetz inequality,
 which were used in several literatures of the same type of results,
 \cite{ChHKY,CGT1,C-K-S,DPTS,DPTS1,MXZ}. The idea of $I$-method,
 which introduced by Colliander \emph{et.al.} \cite{C-K-Sp},
 is to use a smoothing operator $I$ which regularizes a rough solution up to the regularity level of a conservation law by damping high frequency part. In our example, when $\phi \in H^s$ for $s<1$, $E(\phi)$ may not be finite,
   but for a smoothed function $I\phi$, $E(I\phi)$ is finite. Here, one doesn't expect that $E(I\phi)$ is conserved,
   since $I\phi$ is not a solution to \eqref{SP}. But if $I$ operator is close to the identity operator in some sense,
   $I\phi$ is close to a solution and $E(I\phi)$ is \emph{almost} conserved.
   In fact, we control the growth of $E(I\phi)(t)$ in time.

In addition to $I$-method, we use the interaction Morawetz inequality. Colliander et al. introduced  in \cite{C-K-S} a new Morawetz interaction potential for the nonlinear Schr\"{o}dinger equation in three dimension.
\begin{equation}\label{moreht1}
M[\phi(t)]:= \intr |\phi(x,t)|^2 {\Big(
\int_{\bbr^n}\mbox{Im}\left[\bar{\phi}(y,t)\nabla \phi(y,t)
\right]\cdot \frac{y-x}{|y-x|}dy \Big)}.
\end{equation}
This is a generalization of the classical Morawetz potential, which
has been studied in many literatures especially regarding on the
dispersive property of the Schr\"{o}dinger equations
\cite{B3,Gri,Lin}. The above  functional
\eqref{moreht1} generates a new space-time $L^4_{t,x}$ estimate for
the nonlinear Schr\"{o}dinger equation with the relatively general defocusing power
nonlinearity. Incorporating this with the \textit{almost
conservation law}, they proved the scattering
of the equation and relaxed the low regularity assumption  given  in the previous work \cite{C-K-Sp}.\\
In \cite{ChHKY} the authors showed the almost conservation law and  Morawetz interaction potential approach
worked as well with the Hartree equation in dimension $3$. More precisely, when the defocusing Hartree nonlinearity
 is mass supercritical and energy subcritical case, which is
$(|x|^{-\gamma}\ast |\phi|^2 )\phi, \quad  2<\ga<3$,
 the equation is globally well posed in $H^s(\bbr^3)$, $ 1>s > \max(\frac 12, \frac{4(\ga-2)}{3\ga-4})$
 and has scattering as well. In the $H^1(\bbr^3)$ case,
the same result was shown in \cite{G-V} and later the scattering part was  simplified  in
\cite{Na}.

The interaction Morawetz inequality is extended to other dimensions \cite{TVZ,DPTS,CGT1}.
But in the mass critical case, where the admissible norm is critical, the space-time norm grows in time. We follow the similar way to \cite{CGT1,DPTS}. Due to local in time Morawetz inequality we are able to control
\begin{align}\label{premora}
\|\phi\|_{\pair([0,T]\times \bbr^n)}\le T^{\frac{n-2}{4(n-1)}}
  \|\phi_0\|_{L^2_x}^{\frac 12}\|\phi\|^{\frac{n-2}{n-1}}_{L_t^{\infty}\dot{H}^{\frac 12}_x ([0,T]\times \bbr^n)}
  \end{align}
  for an admissible pair $(\frac{4(n-1)}{n}, \frac{2(n-1)}{n-2})$. The same machinery in \cite{DPTS} with the above
  inequality \eqref{premora} would yield the result that the global well-posedness of \eqref{SP} holds when
   $1>s> \max\left(\frac 12, \frac{2(n-2)}{3n-4} \right)$. Since we allow the admissible space-time norm grows in time,
   we do not know whether scattering holds true.
   Note that  the number $\frac{2(n-2)}{3n-4}$ is lower than $\frac 12$ in dimension 3. The restriction $s> \frac 12$ is inevitable if relying
on the inequality \eqref{premora}. In order to remove this restriction, we use the the inequality
\eqref{premora} for the smoothed out solution $I\phi$. This idea was first introduced in \cite{CGT1,DPTS1}.
They showed it still holds true with negligible error. In our case we have (For detail see Lemma~\ref{almost mora})
\begin{align*}
\|I\phi\|_{\pair([0,T]\times \bbr^n)}&\le T^{\frac{n-2}{4(n-1)}}
  \big(\|\phi_0\|_{L^2_x}^{\frac {1}{n-1}}\|I\phi\|^{\frac{n-2}{n-1}}_{L_t^{\infty}\dot{H}^{\frac 12}_x ([0,T]\times \bbr^n)}+
  \|I\phi\|_{L_t^{\infty}\dot{H}^{\frac 12}_x ([0,T]\times \bbr^n)}^{\frac{2n-6}{2n-3}}\big)
  \\ &  \quad + \quad T^{\frac{n-2}{4(n-1)}} \mbox{ Error}.
  \end{align*}

Since $I\phi$ is in $H^1$(in particular in $\dot{H}^{\frac 12}$), $s$ may go below $\frac 12$.
We show that on the time interval where the local well-posedness the error therm is very small.
 At the time we prepare this paper we are informed that Miao \emph{et.al.} \cite{MXZ} use the same idea to remove the restriction $s>\frac 12$ in the result of $\dot{H}^{\frac 12}$-subcritical Hartree equation as an improvement of \cite{ChHKY}.
On the other hand, Miao \emph{et. al.}\cite{MXZ2,MXZ1} studied
the focusing or defocusing $L^2$ critical Hartree equations as well. They established the global well-posedness and scattering for $L^2$ radial initial data and the blow up criterion to the focusing $L^2$ critical Hartree equation in $\R^3$. \\ \indent
 Before we close the introduction, we would like to add some remark on the $L^2$-critical focusing case,
 \begin{align}\label{Fo}
\begin{cases}
i\partial_t \phi +\frac12 \Delta \phi = -(\too\ast |\phi|^2)\phi, \quad x \in \Bbb R^n,~t>0, \\
\phi(x, 0) = \phi_0(x) \in H^s(\bbr^n).
\end{cases}
\end{align}
 Note that the local well-posedness proof in Section 2 equally works for the focusing case. The equation is known to have a ground state solution $Q$, which solves
 \[\Delta Q - Q= -(|x|^{-2} \ast |Q|^2)Q.\]
 The existence  of $Q$ is proven in \cite{MXZ1} with the decisive property of being the sharp constant of  the Gagliardo-Nirenberg inequality such as
 \[ \int_{\bbr^n} (|x|^{-2}\ast |u|^2)|u|^2(x) dx \le  \frac {2}{\|Q\|_{L^2}}\|u\|_{L^2}^2\|\nabla u\|_{L^2}^2.\] The uniqueness is open except $n=4$, which was settled in  \cite{KLR} adapting E. Lieb's uniqueness proof in \cite{Lieb}.
%
%
%

The paper is organized as follows. In Section~\ref{sec-2}, we review the local well-posedness theorem using the Strichartz estimate.
In Section~\ref{s-4} we give the definition of $I$ operator, show the modified local well-posedness of $I\phi$, and obtain the upper bound of time increment of the modified energy. In Section~\ref{sec-3} we recall the \emph{almost} interaction Morawetz inequality for $I\phi$ and show the error bound. In Section~\ref{sec-5} we conclude the proof of global well-posedness in Theorem~\ref{T}.
\subsection*{Notations}
Given $A,B$, we write $A\lesssim B$ to mean that for some universal constant $K>2$, $A\le
K\cdot B$. We write $A \sim B$ when both $A \lesssim B$ and
$B\lesssim A$. The notation $A\ll B$ denotes $B> 3\cdot A$. We write
$\langle A \rangle \equiv (1+A^2)^{\frac 12}$, and $\langle \nabla
\rangle$ for the operator with Fourier multiplier $(1 +
|\xi|^2)^{\frac 12}$. The symbol $\nabla$ denote the spatial gradient.
We will often use the notation $\frac 12 + \equiv \frac 12
+\epsilon$ for some universal $0< \ep \ll 1$. Similarly, we write
$\frac 12- \equiv \frac 12 -\ep$. We use the function space
$L_t^qL_x^r$ and $H^{s,p}$ given norms by
\begin{align*}
 \|F\|_{L_t^qL_x^r(\bbr^{n+1})} &\equiv
\left( \int_{\bbr}\left(\int_{\bbr^n}|F(x,t)|^r dx \right)^{\frac qr} dt\right)^{\frac 1q}, \\
\|u\|_{H^{s,p}(\bbr^n)} & \equiv \|\mathcal{F}^{-1}[(1+|\xi|^2)^{\frac s2}\mathcal{F}u]\|_{L^p(\bbr^n)},
\end{align*}
where $\mathcal{F}$ is a fourier transform,  $1\le p,q,r \le \infty$.

\subsection*{Acknowledgements} M.C. is supported by KRF-2007-C00020. S.K. thanks Terry Tao for helpful conversations.

\section{\bf{The local well-posedness}}\label{sec-2}
We refer $(q,r)$ the admissible pair when $2\le q <\infty$, $2\le r\le \frac{2n}{n-2}$ and
\[ \frac 2q + \frac{n}{r} = \frac n2 \]
and state the Strichartz inequality in dimension $n$.
\begin{Prop}\label{prop2.1}
Suppose that $(q,r)$, $(\la, \eta)$ are any two admissible pairs.
Suppose that $u(x,t)$ is a solution of the problem
\begin{align}\label{inhomo}
i\pa_tu(x,t)+\Delta u(x,t) = F(x,t),~~(x,t) \in \bbr^n\times [0,T],
\end{align}
for a data $u(0)\in H^s$, $F\in \stx {\la'}{\eta'}(\TR)$ where $\la'$
and $\eta'$ are the H\"{o}lder conjugates of $\la$ and $\eta$,
respectively. Then $u$ belongs to $\stx{q}{r}\cap
C_tH_x^{s,r}([0,T]\times \bbr^3)$ and we have the estimate
\begin{equation*}\label{stri}
\|u\|_{\stx{q}{r}}\lesssim \|u(0)\|_{H^s(\bbr^n)}+
\|F\|_{\stx{\la'}{\eta'}}.
\end{equation*}
\end{Prop}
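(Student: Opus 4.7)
The plan is to follow the standard Keel--Tao approach, reducing everything to the dispersive estimate and $L^2$ preservation of the free Schr\"odinger group, and then propagating Sobolev regularity by commuting $\langle\nabla\rangle^s$ with the constant-coefficient operator $e^{it\Delta}$.

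First I would treat the homogeneous estimate $\|e^{it\Delta}f\|_{L^q_tL^r_x(\bbr\times\bbr^n)}\lesssim \|f\|_{L^2_x}$ for every admissible $(q,r)$. The two ingredients are the dispersive bound $\|e^{it\Delta}f\|_{L^\infty_x}\lesssim |t|^{-n/2}\|f\|_{L^1_x}$ and the unitarity $\|e^{it\Delta}f\|_{L^2_x}=\|f\|_{L^2_x}$; Riesz--Thorin interpolation between them gives the required $L^{r'}\to L^r$ decay with rate $|t|^{-n(\frac12-\frac1r)}=|t|^{-2/q}$. The $TT^*$ argument then reduces the homogeneous estimate to the bilinear inequality
\begin{equation*}
\Big|\iint \langle e^{i(t-s)\Delta} F(s), G(t)\rangle\, ds\, dt\Big| \lesssim \|F\|_{L^{q'}_tL^{r'}_x}\|G\|_{L^{q'}_tL^{r'}_x},
\end{equation*}
which, for the non-endpoint range $q>2$, follows at once from Hardy--Littlewood--Sobolev on $\bbr$.

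Next I would pass to the inhomogeneous estimate by Duhamel's formula
\begin{equation*}
u(t) = e^{it\Delta}u(0) - i \int_0^t e^{i(t-s)\Delta} F(s)\, ds,
\end{equation*}
applied on $[0,T]$. Combining the dual homogeneous estimate (which controls $\|F\mapsto \int e^{-is\Delta}F(s)\,ds\|_{L^2_x}$ by $\|F\|_{L^{\lambda'}_tL^{\eta'}_x}$) with the direct homogeneous estimate for the pair $(q,r)$ yields the ``untruncated'' inequality $\|\int_\bbr e^{i(t-s)\Delta}F(s)\,ds\|_{L^q_tL^r_x}\lesssim \|F\|_{L^{\lambda'}_tL^{\eta'}_x}$; the retarded version with the truncation $\int_0^t$ follows from the Christ--Kiselev lemma whenever $q>\lambda'$, which is guaranteed off the endpoint. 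The double-endpoint case $q=\lambda=2$, $r=\eta=\frac{2n}{n-2}$ requires the Keel--Tao bilinear dyadic decomposition; this is the one genuinely delicate piece and it is where I expect to spend the most care.

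Finally, to obtain the stated version with Sobolev exponent $s$, I would apply $\langle\nabla\rangle^s$ to Duhamel's formula. Since $\langle\nabla\rangle^s$ commutes with $e^{it\Delta}$ and with time integration, the function $\langle\nabla\rangle^s u$ solves the same inhomogeneous equation with data $\langle\nabla\rangle^s u(0)\in L^2$ and forcing $\langle\nabla\rangle^s F\in L^{\lambda'}_tL^{\eta'}_x$, so the $L^2$-based Strichartz inequality established above gives
\begin{equation*}
\|u\|_{\stx{q}{r}} \lesssim \|u(0)\|_{H^s} + \|F\|_{\stx{\lambda'}{\eta'}}.
\end{equation*}
Continuity in time with values in $H^{s,r}$ follows from a density argument: for Schwartz data and forcing, $u$ is manifestly continuous, and the Strichartz inequality applied to differences gives uniform convergence in $L^\infty_tH^{s,r}_x$ on $[0,T]$, hence continuity in the limit.
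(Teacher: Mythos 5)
The paper does not supply a proof of Proposition~\ref{prop2.1}; it is stated as a classical fact, with the reader implicitly referred to the standard Strichartz literature (Ginibre--Velo, Keel--Tao, Cazenave). Your outline is exactly the canonical argument: dispersive estimate plus unitarity, Riesz--Thorin, $TT^*$ reduction to a bilinear form, Hardy--Littlewood--Sobolev off the endpoint, Christ--Kiselev for the retarded version, Keel--Tao dyadic decomposition at the double endpoint, and finally commuting $\langle\nabla\rangle^s$ through the free group to upgrade from $L^2$-based to $H^s$-based estimates. All the exponent checks you indicate go through, and the identification of where Christ--Kiselev is needed (whenever $q>\lambda'$, automatic off the endpoint) and where Keel--Tao is needed (the single genuinely delicate case $q=\lambda=2$) is correct.

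One small caution, more about the statement than your proof: the conclusion as written, $u\in C_t H^{s,r}_x$ for an arbitrary admissible $r$, is not what the Duhamel/Strichartz argument delivers. The estimate gives $u\in L^q_t H^{s,r}_x$ for each admissible $(q,r)$, and continuity in time is obtained only in the $r=2$ space, i.e.\ $u\in C_t H^s_x$, since $(\infty,2)$ is the unique admissible pair with infinite time exponent. Your closing density argument for continuity is therefore valid for $H^s_x=H^{s,2}_x$, but would not produce $C_t H^{s,r}_x$ for $r\neq 2$; I would read the proposition's ``$C_t H^{s,r}_x$'' (and the stray ``$\bbr^3$'') as typos for $C_t H^s_x$ on $[0,T]\times\bbr^n$.
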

For the pure power nonlinearity $\la |u|^{\al}u$, the local well-posedness of
$i\pa_t u +\frac 12 \Delta u = \la |u|^{\al}u$ with the rough data $u(0)\in H^s$, $ 0<s<1$
was proven in \cite{Ca} (See also \cite{Ca1,Tao book}).

We define the Strichartz norm of functions $\phi:[0,T]\times \bbr^n \to \mathbb{C}$ by
\[\|\phi\|_{S^0_T} = \displaystyle\sup_{(q,r)\, admissible} \|\phi\|_{\tx{q}{r}([0,T]\times \bbr^n)}.\]
In particular  $S^0_T \subset C_tL_x^2([0,T]\times \bbr^n)$.
Then the Strihartz estimates may be written as
\[ \|\phi\|_{S^0_T} \le \|\phi\|_{L^2} + \|(i\pa_t +\Delta)\phi\|_{\tx{q'}{r'}([0,T]\times \bbr^n)},\]
where $(q',r')$ is the conjugate of an admissible pair $(q,r)$.

The local existence theorem  of \eqref{SP} is as follows.
\begin{theorem}\label{thm2.1}
For a given $\phi_0\in H^s(\bbr^n)$, $0 <s $,
there exists a positive time $T=T(\|\phi_0\|_{H^s})$ and the unique solution $\phi$ of (\ref{SP}),
in $\phi\in C_tH_x^s([0,T]\times \bbr^n) \cap S_T^s$ for every admissible pair $(q,r)$, where
\[\|\phi\|_{S^s_T} = \displaystyle\sup_{(q,r)\, admissible}\|\langle\nabla\rangle^s \phi\|_{\tx{q}{r}([0,T]\times \bbr^n)}.\]
\end{theorem}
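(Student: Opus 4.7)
The plan is to solve \eqref{SP} via a standard contraction mapping in a Strichartz ball; the only new feature compared with the pure power NLS case is the treatment of the Hartree convolution. Define the Duhamel map
$$\Phi(\phi)(t) = e^{\frac{it}{2}\Delta}\phi_0 - i\int_0^t e^{\frac{i(t-\tau)}{2}\Delta}\bigl[(\too\ast |\phi|^2)\phi\bigr](\tau)\,d\tau,$$
and seek a fixed point on $B_R = \{\phi \in S^s_T : \|\phi\|_{S^s_T} \le R\}$ with $R = 2C\|\phi_0\|_{H^s}$ and $T = T(\|\phi_0\|_{H^s}) > 0$ to be chosen.

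First I apply the Strichartz estimate of Proposition~\ref{prop2.1} with some admissible pair $(q,r)$ and a convenient dual pair on the nonlinearity, reducing the fixed-point problem to the trilinear bound
$$\|\langle\nabla\rangle^s\bigl[(\too\ast |\phi|^2)\phi\bigr]\|_{L^{q'}_tL^{r'}_x([0,T]\times \bbr^n)} \lesssim T^\theta \|\phi\|_{S^s_T}^3$$
for some $\theta = \theta(s) > 0$. To prove this I would distribute $\langle\nabla\rangle^s$ by the fractional Leibniz (Kato--Ponce) rule, producing contributions of two types: one where the derivative lands on the bare $\phi$, and one where it lands on the Hartree potential $\too\ast|\phi|^2$. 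In each case the Hardy--Littlewood--Sobolev inequality, legitimate for $n\ge 3$ where $|x|^{-2}$ is locally integrable, controls the convolution in some $L^p_x$ by a product of $L^{p_1}_x$ norms of $\phi$; a spatial H\"older step and the Sobolev embedding $H^{s,p}\hookrightarrow L^{\widetilde p}_x$ then recast every factor in terms of norms subsumed by $\|\phi\|_{S^s_T}$. Because $s>0$ there is slack in the scaling, so H\"older in time can be chosen strictly subadmissible and leaves the positive power $T^\theta$.

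The analogous Lipschitz estimate
$$\|\Phi(\phi)-\Phi(\psi)\|_{S^s_T} \lesssim T^\theta \bigl(\|\phi\|^2_{S^s_T}+\|\psi\|^2_{S^s_T}\bigr)\|\phi-\psi\|_{S^s_T}$$
follows from the same multilinear scheme, since the map $\phi\mapsto \too\ast|\phi|^2$ is real-bilinear in $\phi$. Choosing $T$ so that $CT^\theta R^2 \le \tfrac14$ gives a unique fixed point in $B_R$; the property $\phi\in C_tH^s_x$ is automatic from the $(q,r)=(\infty,2)$ entry of the Strichartz norm, and unconditional uniqueness in $S^s_T$ is obtained by running the difference estimate on any common interval of existence.

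The main technical obstacle is the joint choice of the admissible pair $(q,r)$, the HLS intermediate exponents, and the Sobolev embedding exponents so that $\theta>0$ with every H\"older condition simultaneously met; this subcritical gain is precisely what fails at the $L^2$-critical endpoint $s=0$, where, as noted in the introduction, the lifetime must depend on the profile of $\phi_0$ and not only on its norm. For $s\ge 1$ the ordinary Leibniz rule suffices and the bookkeeping is classical; for $0<s<1$ the spatial Kato--Ponce inequality is essential, but once the exponents are chosen consistently the estimates are routine.
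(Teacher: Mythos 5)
Your proposal follows essentially the same route as the paper: Duhamel plus Strichartz, distributing $\langle\nabla\rangle^s$ over the Hartree nonlinearity by the fractional Leibniz rule, controlling the convolution by Hardy--Littlewood--Sobolev, closing via Sobolev embedding and H\"older in time to extract a positive power of $T$ from the subcritical slack $s>0$. The paper simply makes the exponent choice explicit, taking $(\lambda',\eta') = (\tfrac{4}{3+s}, \tfrac{2n}{n-s+1})$ and landing on $T\lesssim \|\phi_0\|_{H^s}^{-2/s}$, whereas you leave the bookkeeping implicit; otherwise the arguments coincide.
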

\begin{proof}
Let $S^L(t)$ be the flow map $e^{it\Delta}$ corresponding to the the linear Schr\"{o}dinger equation.
Then the Duhamel formulation of \eqref{SP} is
\[\phi(t)= S^L(t)\phi_0 -i \int_0^t S^L(t-\tau)|x|^{-2}\ast |\phi|^2 \phi(\tau) d\tau.\]
We will show that the map
$A: \phi \longrightarrow S^L(t)\phi_0 -i \int_0^t
S^L(t-\tau)[(\too\ast |\phi|^2) \phi](\tau) d\tau$ is a contraction mapping on the ball
$\|\phi\|_{S^s_T} \le 2M$ when   $T$ is chosen later and  $\|\phi_0\|_{H^s}<M $.

Let us show $A$ is well defined on $X$. Applying the linear and the dual Strichartz estimates, we have
\begin{align}\label{aphi}
\|A\phi\|_{S^s_T} \lesssim
 \|\phi_0\|_{H^s} +\| |x|^{-2}\ast |\phi|^2\phi \|_{\stx {\la'}{\eta'}}
 \end{align}

for any admissible $(\la,\eta)$.
We recall the Leibnitz rule for
fractional Sobolev spaces \cite{CW,Tay}:
\noindent For $s>0,$ $1<p<\infty$,
\[ \|fg\|_{H^{s,p}} \lesssim \|f\|_{L^{q_1}}\|g\|_{H^{s,q_2}}+ \|f\|_{L^{r_1}}\|g\|_{H^{s,r_2}}\]
provided $\frac 1p= \frac{1}{q_1} + \frac{1}{q_2}= \frac{1}{r_1}+
\frac{1}{r_2}, \mbox{ with }
\, q_2,r_2\in (1,\infty) \mbox{ and } q_1,r_1\in (1,\infty].$ \\

Let us choose $(\la', \eta')= (\frac{4}{3+s}, \frac{2n}{n-s+1})$.
The fractional Leibnitz rule, Hardy-Sobolev inequality and H\"{o}lder's inequality lead to
\begin{align}\label{V}\begin{aligned}
\| (\too\ast |\phi|^2) \phi\|_{H^{s,\frac{2n}{n-s+1}}} &\le \|\too\ast
|\phi|^2\|_{H^{s,n}}\|\phi\|_{L^{\frac{2n}{n-s-1}}}
 + \|\too\ast |\phi|^2\|_{L^{\frac{n}{1-s}}}\|\phi\|_{{H^{s,\frac{2n}{n+s-1}}}}\\
&\lesssim \||\phi|^2\|_{H^{s,\frac{n}{n-1}}} \|\phi\|_{L^{\frac{2n}{n-s-1}}}
 + \|\phi\|^2_{L^{\frac{2n}{n-s-1}}} \|\phi\|_{H^{s,\frac{2n}{n-s+1}}}\\
&\lesssim 2 \|\phi\|^2_{L^{\frac{2n}{n-s-1}}} \|\phi\|_{H^{s,\frac{2n}{n+s-1}}}.\\
\end{aligned}\end{align}
By the Sobolev embedding we have
\[ \| (\too\ast |\phi|^2) \phi\|_{H^{s,\frac{2n}{n-s+1}}} \lesssim \|\phi\|_{H^{s,\frac{2n}{n+s-1}}}^3.\]
Combining this with \eqref{aphi} we find
\begin{align*}
\| A\phi\|_{S^s_T} &\lesssim \|\phi_0\|_{H^s} +
\left(\int_0^T \|\phi\|^{\frac{12}{3+s}}_{H^{s,\frac{2n}{n+s-1}}} dt\right)^{{\frac{3+s}{4}}}\\
& \lesssim  \|\phi_0\|_{H^s}+  T^s\|\phi\|^{3}_{\stx {\frac{4}{1-s}}{\frac{2n}{n+s-1}}}\\
& \lesssim \|\phi_0\|_{H^s} + T^s \|\phi\|_{S^s_T}^3.
\end{align*}
The local well-posedness time $T$ is chosen as $T\lesssim \|\phi_0\|_{H^s}^{-\frac 2s}$.
Similarly, one can show that $A$ is a contraction. And uniqueness assertion and continuous dependence on data follow in the same manner.
\end{proof}

\section{\bf{Almost conservation law of the modified energy}}\label{s-4}
In this section, we define the smoothing operator $I_N$, which sends an $H^s$ function to an $H^1$ function. We find a bound of the growth of $E(I_N\phi)(t)$ in time. \\
The operator $I_N$ is defined as in \cite{C-K-S}. Let $N \gg 1$ be a parameter to be chosen later. Define
\begin{equation}\label{hl}
\widehat{I_Nf}(\xi) \equiv m(\xi)\widehat f(\xi),
\end{equation}
where the multiplier $m(\xi)$ is smooth, radially symmetric,
nonincreasing in $|\xi|$ and satisfies
\begin{equation}\label{m}
m(\xi)=
\begin{cases}
\begin{array}{ll}
1 & |\xi|\le N \\
\left( \frac {N}{|\xi|}\right)^{1-s} & |\xi|\ge 2N .
\end{array}
\end{cases}
\end{equation}
We note that $m(\xi)$ satisfies the H\"{o}rmander multiplier
condition. As intended, the definition of $m(\xi)$ gives the
following relations between $\|I_N\phi\|_{H^1}$ and $\|\phi\|_{H^s}$
for $0<s<1$;
\begin{align}
\|I_N\phi\|_{H^1(\bbr^n)}&\approx \sum_{k\le \log N}(1+2^k)\|P_k \phi\|_{L^2(\bbr^n)} +
\sum_{k > \log N}N^{1-s}(1+2^{k})^s\|P_k \phi\|_{L^2(\bbr^n)}\nonumber\\
&\lesssim N^{1-s}\|\phi\|_{H^s(\bbr^3)}\label{Iphi}\\
\|\phi\|_{ H^s(\bbr^n)} &\lesssim \sum_{k\le \log N}(1+2^k)^s \|P_k I\phi\|_{L^2(\bbr^n)} +
\sum_{k> \log N}(1+2^k)N^{s-1}\|P_k I\phi\|_{L^2(\bbr^n)}\nonumber\\
& \lesssim \|I_N\phi\|_{H^1(\bbr^n)}, \nonumber
\end{align}
where $P_k \phi$ is defined by $\widehat {P_k \phi}(\xi)=
\varphi(\xi/2^k) \widehat \phi(\xi)$ for a nonnegative smooth function
$\varphi$ with $\mbox{supp } \phi =\{ \xi| 2^{-1}\le |\xi|\le 2 \}$
and $\sum_{k\in \mathbb Z}\varphi(2^{-k}\xi)=1$. What it follows  we write $I$ for $I_N$ suppressing $N$.\\
Let us define the iteration space $Z_{I}(t)$ as
\begin{eqnarray*} 
Z_{I}(t) \ = \ \sup_{(q,r)\,admissible}
\|\langle\nabla\rangle I\phi\|_{L_t^qL_x^r([0,t]\times \bbr^3)},
\end{eqnarray*}

\subsection{Modified local theory}
First of all, we prove a local well-posedness result for the modofied solution $I\phi$. This theorem is essentially similar to the local well-posedness proof at the critical regularity in \cite{Ca}. But here we assume critical Strichartz norm of $I\phi$ is small, instead of $\phi$. Similar proofs are found in \cite{CGT1}, \cite{DPTS1}.

\begin{lemma}\label{lem4.1}
For given initial data $\phi_0\in H^s(\R^n)$ for $0<s$, there are time $T^*>0$ and a universal constant $\delta>0$ satisfying the following:
\begin{enumerate}
\item The solution $\phi(x,t)$ to \eqref{SP} exists on $[0,T^*]\times
\bbr^n$,
\item If \[ \|I\phi\|_{\pair([0,T^*]\times \bbr^n)} \le \del,\]
then \[ Z_I(T^*) \ \lesssim \|\langle\nabla\rangle I \phi_0\|_{L^2(\bbr^n)}.\]
\end{enumerate}
\end{lemma}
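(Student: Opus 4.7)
The plan is to treat this as a standard perturbative argument around the Strichartz-based contraction, but with the (essentially cubic) Hartree nonlinearity controlled using the \emph{smallness of the Morawetz-type norm} $\|I\phi\|_{\pair}$ rather than smallness in time $T$. Part (1) is free: by Theorem~\ref{thm2.1} there is a time $T^*=T^*(\|\phi_0\|_{H^s})$ on which the $H^s$-solution to \eqref{SP} exists, and since $\|I\phi_0\|_{H^1}\lesssim N^{1-s}\|\phi_0\|_{H^s}<\infty$, $\langle\nabla\rangle I\phi$ is in every admissible Strichartz class on $[0,T^*]$ so $Z_I(T^*)<\infty$ a priori. The content of the lemma is the quantitative bound in (2).

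For (2), apply $\langle\nabla\rangle I$ to the Duhamel formula and use Proposition~\ref{prop2.1}: since $\langle\nabla\rangle I$ commutes with $S^L(t)$,
\begin{equation*}
Z_I(T^*) \;\lesssim\; \|\langle\nabla\rangle I\phi_0\|_{L^2_x} \;+\; \|\langle\nabla\rangle I[(\too\ast|\phi|^2)\phi]\|_{\Tx{\la'}{\eta'}}
\end{equation*}
for a dual admissible pair $(\la',\eta')$ to be chosen. The heart of the proof is then a trilinear estimate of the form
\begin{equation*}
\|\langle\nabla\rangle I[(\too\ast|\phi|^2)\phi]\|_{\Tx{\la'}{\eta'}} \;\lesssim\; \|I\phi\|_{\pair}^{2}\, Z_I(T^*).
\end{equation*}
I would produce it by: (i) using that $m(\xi)$ is H\"ormander so that $\langle\nabla\rangle I$ obeys a fractional Leibniz rule, distributing one $\langle\nabla\rangle I$ onto one factor of $\phi$ while leaving the other two factors as $I\phi$; (ii) applying Hardy--Littlewood--Sobolev to the convolution with $|x|^{-2}$ exactly as in \eqref{V}; and (iii) choosing $(\la',\eta')$ (dual to the Morawetz pair $\pair$) and the remaining space-time exponents so that two factors land in $\pair$ and the third, carrying $\langle\nabla\rangle I$, lands in a genuine admissible pair captured by $Z_I$. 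In the estimate, the ambient $\phi$ that needs to be replaced by $I\phi$ is handled by the Littlewood-Paley splitting $\phi=\phi_{\le N}+\phi_{>N}$: on $\phi_{\le N}$ one has $\phi_{\le N}=(I\phi)_{\le N}$, and on $\phi_{>N}$ the inverse symbol $1/m$ costs an extra factor $N^{-(1-s)}|\nabla|^{1-s}$, absorbed by taking $N$ large.

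Combining the two displays with the hypothesis $\|I\phi\|_{\pair}\le\delta$ gives
\begin{equation*}
Z_I(T^*) \;\lesssim\; \|\langle\nabla\rangle I\phi_0\|_{L^2_x} + C\delta^{2} Z_I(T^*),
\end{equation*}
and because $Z_I(T^*)$ is already known to be finite, choosing the universal constant $\delta$ so that $C\delta^2<1/2$ lets us absorb the last term on the left and conclude $Z_I(T^*)\lesssim \|\langle\nabla\rangle I\phi_0\|_{L^2}$.

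The main obstacle is the trilinear estimate: one must simultaneously satisfy dual-Strichartz admissibility in time, H\"older in space, and the Hardy--Sobolev hypothesis for the $|x|^{-2}$ convolution, while arranging that exactly two of the three $\phi$-factors wind up measured in $\pair$. Verifying that this tri-exponent scheme is consistent for every $n\ge 3$ (and that the admissible exponent carrying $\langle\nabla\rangle I$ is not forced below $2$) is the delicate bookkeeping step; the H\"ormander-multiplier/Leibniz reduction of $\langle\nabla\rangle I$ to a plain fractional derivative, in the style of \cite{CGT1,DPTS1}, is what makes it go through.
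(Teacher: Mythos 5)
Your overall outline matches the paper: apply $\langle\nabla\rangle I$ to Duhamel, use Strichartz, distribute $\langle\nabla\rangle I$ by a fractional Leibniz rule, and handle the $|x|^{-2}$ convolution by Hardy--Littlewood--Sobolev. The crucial gap is in the trilinear estimate and the closing step. You claim the clean bound
\begin{equation*}
\|\langle\nabla\rangle I[(\too\ast|\phi|^2)\phi]\|_{\Tx{\la'}{\eta'}} \;\lesssim\; \|I\phi\|_{\pair}^{2}\, Z_I(T^*),
\end{equation*}
which would let you close with $\delta$ small alone. But this is not achievable: the two undecorated $\phi$-factors cannot be placed in $\pair$ with only $\|I\phi\|_{\pair}$ on the right, because the multiplier $1/m(\xi)$ is unbounded (it grows like $(|\xi|/N)^{1-s}$ on high frequencies), so $\|\phi\|_{\pair}\not\lesssim \|I\phi\|_{\pair}$. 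Your high/low split acknowledges this, but the high-frequency contribution is $\|\phi_{>N}\|_{\pair}\lesssim N^{-(1-s)}\|\langle\nabla\rangle I\phi\|_{\pair}\le N^{-(1-s)}Z_I$, i.e.\ a $Z_I$-weighted term, not an $\|I\phi\|_{\pair}$-weighted one. Carrying that through gives something like $Z_I\lesssim \|I\phi_0\|_{H^1}+\delta^2 Z_I+N^{-2(1-s)}Z_I^3$, with a genuine cubic term in $Z_I$. Your phrase ``absorbed by taking $N$ large'' does not eliminate this term: $N$ is a fixed parameter, and in any case the resulting estimate still has $Z_I^3$ on the right.

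Consequently the final absorption cannot be done as you describe. Saying ``because $Z_I(T^*)$ is already known to be finite'' is not enough to absorb a superlinear term: you need $Z_I$ bounded by an $O(1)$ quantity, which requires a continuity/bootstrap argument on a short interval (this is exactly why the paper's statement ends with ``choosing sufficiently small $\delta$ \emph{and} $T^*$''). In the paper's version the high-frequency piece is handled instead by an $\epsilon$-interpolation between $\|IP_{N_j}\phi\|_{\pair}$ and $\|\langle\nabla\rangle IP_{N_j}\phi\|_{\pair}$, giving the estimate $Z_I\lesssim \|I\phi_0\|_{H^1}+Z_I\|I\phi\|_{\pair}^2+Z_I^{3-2\epsilon}\|I\phi\|_{\pair}^{\epsilon}$. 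Either route has a term of degree $>1$ in $Z_I$; you need to acknowledge that term explicitly and close it with a continuity argument in $T^*$ rather than by smallness of $\delta$ alone.

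Two smaller points worth noting: (i) the paper distributes $\langle\nabla\rangle I$ into \emph{both} pieces, $(\too\ast\langle\nabla\rangle I|\phi|^2)\phi$ and $(\too\ast|\phi|^2)\langle\nabla\rangle I\phi$, and estimates them separately with different H\"older splittings, so you should verify both terms rather than a single representative; (ii) the exponent scheme in the paper takes $(\gamma',\rho')=(4/3,\,2n/(n+1))$ with the $\langle\nabla\rangle I\phi$ factor landing in the admissible pair $(4(n-1)/(n-3),\,2n(n-1)/(n^2-2n+3))$ — this forces $n\ge 4$ for that particular pair, and at $n=3$ a slight modification (larger $\gamma$) is needed, which the paper flags by noting any admissible $\gamma\ge 2(n-1)/(n-2)$ works. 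Your ``delicate bookkeeping'' remark is accurate, but the book must actually be kept.
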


\begin{proof} \text{ }\\
The first part is from the local well-posedness theorem, Theorem~\ref{thm2.1}. The second part is also done by the Strichartz estimate \eqref{stri} in the Duhamel formula with $\langle\nabla\rangle I$ operator:
$$ \langle\nabla\rangle I\phi(x,t) =  S^L(t)\langle\nabla\rangle I\phi_0 -i \int_0^t S^L(t-\tau)\langle\nabla\rangle I(|x|^{-2}\ast |\phi|^2 \phi(\tau)) d\tau. $$
For all $0\le t \le T^*$, we have
\begin{align}\label{es Z}
Z_I(t) &\lesssim \|I\phi_0\|_{H^1} + \|\langle\nabla\rangle I((\too\ast |\phi|^2)\phi)\|_{L_t^{\gam'} L_x^{\rho'}}\\
& \lesssim \|I\phi_0\|_{H^1} + \|(\too\ast \langle\nabla\rangle I|\phi|^2 )\phi\|_{L_t^{\gam'} L_x^{\rho'}}+
\|((\too\ast |\phi|^2)\langle\nabla\rangle I\phi)\|_{L_t^{\gam'} L_x^{\rho'}},  \nonumber
\end{align}
where $(\gamma,\rho)$ is admissible. In the previous step we have used Leibniz's rule for $\langle\nabla\rangle I$. Note that in the high frequency $(|\xi|>N)$, $I$ is a negative derivative, but $\langle\nabla\rangle I$ is a positive fractional derivative. A simple modification of the proof of the fractional Leibniz rule works for it.
Let us  choose $(\gamma, \rho)= (4, \frac{2n}{n-1})$.
In fact we can use any admissible pair satisfying  $\gamma \ge \frac {2(n-1)}{n-2}$.
\noindent We first estimate $\|(\too\ast |\phi|^2)\nabla
I\phi\|_{\tx{\frac 43}{\frac{2n}{n+1}}}$. By using
H\"{o}lder's, fractional Sobolev's  inequalities, we obtain
\begin{align}\label{s-1}\begin{aligned}
\|(\too\ast |\phi|^2)\langle\nabla\rangle I\phi \|_{\tx{\frac 43}{{\frac{n+1}{2n}}}}
&\le \|\too\ast |\phi|^2\|_{\tx{q_1}{r_1}}
\|\langle\nabla\rangle I\phi\|_{\tx{q_2}{r_2}}\\
& \le\|\phi\|^2_{\tx{2q_1}{2p}}\|\langle\nabla\rangle I\phi\|_{\tx{q_2}{r_2}}\\
&\le\|\phi\|^2_{\tx{2q_1}{2p}}Z_I,
\end{aligned}\end{align}
where
\[ \frac 34 = \frac{1}{q_1}+ \frac{1}{q_2}, \quad \frac{2n}{n+1}= \frac{1}{r_1}+ \frac{1}{r_2},\quad
1+\frac{1}{r_1}=\frac 2n +\frac{1}{p}\]
and $(q_2, r_2)$ is admissible.  Due to  scaling argument, $(2q_1, 2p)$ is expected to be admissible. Let $(2q_1, 2p)= (\frac {4(n-1)}{n}, \frac {2(n-1)}{n-2})$, then $(q_2, r_2) = (\frac{4(n-1)}{n-3},\frac{2n(n-1)}{n^2-2n+3})$. \\
In a similar way, the other term is also estimated as follows:
\begin{align}\label{s-11}
\|(\too\ast \langle\nabla\rangle I|\phi|^2 )\phi\|_{L_t^{\gam'} L_x^{\rho'}} \leq \|\phi\|^2_{\pair}Z_I
\end{align}
Now we estimate $\|\phi\|^2_{\pair}$. We decompose $\phi$ into its frequency localized pieces, $\phi= P_{\le N}\phi+  \sum_{j=1}^{\infty} P_{N_j}\phi$, where $N_j=2^{k_j}$ and and $k_j$'s are consecutive integers starting from $[\log N]$ indexed by $j=1,2,3\cdots$. By triangle inequality we get
\begin{align}\label{pair es}\begin{aligned}
\|\phi\|_{\pair} &\le \|P_{\le N}\phi\|_{\pair} + \sum_{j=1}^{\infty}\|P_{N_j}\phi\|_{\pair} \\
&= \|P_{\le N}\phi\|_{\pair} + \sum_{j=1}^{\infty}\|P_{N_j}\phi\|^{\epsilon}_{\pair}\|P_{N_j}\phi\|^{1-\epsilon}_{\pair}.
\end{aligned}\end{align}
From the definition of $I$ operator we have the followings:
\begin{align*}
\|P_{\le N}\phi\|_{\pair} &= \|I\phi\|_{\pair}, \\
\|P_{N_j}\phi\|_{\pair} &\lesssim N_j^{1-s}N^{s-1}\|IP_{N_j}\phi\|_{\pair}, \\
\|P_{N_j}\phi\|_{\pair} &\lesssim N_j^{-s}N^{s-1}\|\langle\nabla\rangle IP_{N_j}\phi\|_{\pair}. \\
\end{align*}
Putting these together into \eqref{pair es}, we obtain
\begin{align*}
\|\phi\|_{\pair} &\lesssim \|P_{\le N}\phi\|_{\pair} + \\ &\sum_{j=1}^{\infty}N_j^{-s+\epsilon}N^{s-1}\|P_{N_j}\phi\|^{\epsilon}_{\pair}\|\langle\nabla\rangle IP_{N_j}\phi\|^{1-\epsilon}_{\pair}.
\end{align*}
Ignoring $N^{s-1} \leq 1$ and using the fact that $ \|P_N f\|_{L^p} \lesssim \|f\|_{L^p}$, one can sum up over $j$, if $s>\epsilon$. Thus, we have 
$$ \|\phi\|_{\pair} \lesssim \|P_{\le N}\phi\|_{\pair} + \|I\phi\|^{\epsilon}_{\pair}\|\langle\nabla\rangle I\phi\|^{1-\epsilon}_{\pair}.$$
Hence, from \eqref{es Z} we conclude
$$ Z_I \lesssim \|I\phi_0\|_{H^1} + Z_I\|I\phi\|^2_{\pair} + Z_I^{3-2\epsilon}\|I\phi\|^{\epsilon}_{\pair}. $$
Choosing sufficiently small $\delta$ and $T^*$, we conclude the proof.
\end{proof}

\subsection{Almost conservation law} We show the \emph{almost conservation law} of the modified energy. \\
The usual energy \eqref{energy} is shown to be conserved by differentiating in time
\begin{align*}
&\frac{d}{dt}E(\phi)(t)\\
&\mbox =  \int_{\bbr^n} 2\text{Re}\overline{\pa_t \phi}(2(|x|^{-2}\ast |\phi|^2)\phi - \Delta\phi-2\pa_t\phi)
 + (|x|^{-2}\ast\pa_t|\phi|^2) |\phi|^2 - (|x|^{-2}\ast|\phi|^2)\pa_t|\phi|^2 dx \\
&\mbox = \int_{\bbr^n}(|x|^{-2}\ast\pa_t|\phi|^2) |\phi|^2 - (|x|^{-2}\ast|\phi|^2)\pa_t|\phi|^2 dx\\
& \mbox = 0 ,
\end{align*}
 using the equation \eqref{SP}. Since $I\phi$ is not a solution to the equation \eqref{SP}, $E(I\phi)(t)$ is not conserved. But still we have a control of the time increment of the modified energy $E(I\phi)(t)$. Differentiating
$E(I\phi)(t)$ in time, we obtain
\begin{align*}
\frac{d}{dt}E(I\phi)(t)  = & \int_{\bbr^n}2\text{Re}
\overline{\pa_t I\phi}\left[2(I(|x|^{-2}\ast |\phi|^2)\phi)- \Delta I\phi
- 2i \pa_t I\phi\right]dx.
\end{align*}
Then we have
\begin{align}
E(I\phi(T))-E(I\phi(0))& = \ 4\text{Re} \int_0^T \int_{\bbr^n}
\overline{\pa_t I\phi}
\left[ (|x|^{-2}\ast |I\phi|^2)I\phi- I((|x|^{-2}\ast|\phi|^2)\phi)\right]\ dx \, dt \nonumber\\
& := \ E_T (t)\quad \label{p}
\end{align}
The following proposition shows that $E(I\phi)$ is an \emph{almost} conserved quantity.
\begin{Prop}\label{ACL}
Assume we have $s > 0$, $N\gg 1$, $\phi_0 \in
C_0^{\infty}(\bbr^n)$, and a solution of \eqref{SP} on a time
interval $[0,T]$ for which
\[ \|I\phi\|_{\pair(\TR)} \lesssim \del.\]
Assume in addition that $\| \langle \nabla \rangle I\phi_0\| \lesssim 1$. Then we conclude that for all $t\in [0,T]$,
\[ E(I\phi)(t)= E(I\phi_0) + O(N^{-{1+}}).\]
\end{Prop}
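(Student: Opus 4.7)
The plan is to control the error $E_T$ by casting the expression in \eqref{p} as a multilinear Fourier integral and extracting a pointwise smallness bound from the commutator of $I$ with the Hartree nonlinearity. First I will rewrite
\[
\Lambda(\phi):= (\too \ast |I\phi|^2)I\phi - I\bigl((\too\ast |\phi|^2)\phi\bigr)
\]
via Plancherel as a trilinear form whose multiplier, after imposing the constraint $\xi_4=\xi_1-\xi_2+\xi_3$, is
\[
M(\xi_1,\xi_2,\xi_3):= m(\xi_1)m(\xi_2)m(\xi_3) - m(\xi_4),
\]
coupled against the singular kernel $\widehat V(\xi_1-\xi_2)\sim |\xi_1-\xi_2|^{-(n-2)}$. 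The basic algebraic observation is that $M\equiv 0$ whenever $|\xi_j|\le N$ for each $j$, since $m\equiv 1$ on that cube, so the commutator only activates at high frequencies.

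Second, I will remove $\overline{\pa_t I\phi}$ from \eqref{p} using the equation $i\pa_t I\phi+\frac12\Delta I\phi = I\bigl((\too\ast |\phi|^2)\phi\bigr)$. This splits $E_T = E_T^{\mathrm{kin}} + E_T^{\mathrm{pot}}$, the first being a $4$-linear form in which $\overline{\pa_t I\phi}$ is replaced by $|\xi_4|^2 m(\xi_4)\overline{\hat\phi(\xi_4)}$, the second a $6$-linear form in which $\overline{\pa_t I\phi}$ is replaced by a second copy of the smoothed Hartree nonlinearity. The latter is dominated by the former after applying the Hartree Strichartz bound already proven in Theorem~\ref{thm2.1} and \eqref{V}, so it suffices to treat $E_T^{\mathrm{kin}}$ carefully.

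Third, the heart of the matter is the pointwise symbol bound. Order the four frequencies $N_1^*\ge N_2^*\ge N_3^*\ge N_4^*$ and decompose each $\phi$ into Littlewood-Paley pieces. In the region $N_1^*\lesssim N$, $M\equiv 0$ and the contribution vanishes. In the complementary region $N_1^*\gtrsim N$, the mean value theorem together with the monotonicity and H\"ormander multiplier property of $m$ yields a quantitative bound roughly of the form
\[
|M(\xi_1,\xi_2,\xi_3)| \lesssim \frac{N_2^*}{N_1^*}\,m(N_1^*).
\]
Plugging this into the $4$-linear integral, interpolating the factor $N_2^*/N_1^*$ across a $\langle\nabla\rangle I\phi$ term and a low-frequency Strichartz piece, then applying H\"older in time, the Hardy-Littlewood-Sobolev inequality to handle $\widehat V$, and finally the Strichartz norms controlled by the hypothesis $Z_I(T)\lesssim 1$ from Lemma~\ref{lem4.1}, gives the claimed decay $N^{-1+}$ after summing the dyadic pieces; the small $+$ absorbs the borderline $N_1^*\sim N$ logarithmic pile-up.

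The main obstacle will be the dyadic bookkeeping in the presence of the nonlocal kernel. The convolution couples two of the four interacting frequencies, so the commutator gain $N_2^*/N_1^*$ competes with the HLS gain $|\xi_1-\xi_2|^{-(n-2)}$: in the resonant regime $|\xi_1-\xi_2|\ll N_1^*$ one loses the HLS gain and must rely entirely on the commutator gain, while in the non-resonant regime the reverse holds. Balancing these two gains case-by-case so that the resulting dyadic sum is summable and produces exactly the exponent $N^{-1+}$---rather than a worse power which would not close the global iteration in Section~\ref{sec-5}---is the delicate accounting required.
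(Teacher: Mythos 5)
Your overall strategy is the same as the paper's: pass to Fourier variables via Parseval, substitute for $\partial_t I\phi$ from the equation to obtain a quadrilinear (kinetic) and a sexalinear (potential) term, decompose dyadically, extract a pointwise symbol bound from the commutator identity (which vanishes when all frequencies are $\lesssim N$), and close with Strichartz, Bernstein, and a dyadic sum. However, the pointwise symbol bound you assert is not strong enough to close the sum. In the crucial ``two high, two low'' regime $N_1^*\sim N_2^*\gtrsim N\gg N_3^*\ge N_4^*$, the mean value theorem applied to $m$ gives a gain proportional to the \emph{third}-largest frequency over the largest, roughly $N_3^*/N_1^*$ (the paper's Case~2 bound $N_3/N_2$); your bound $N_2^*/N_1^*\,m(N_1^*)$ degenerates to $\sim 1$ at the marginal scale $N_1^*\sim N_2^*\sim N$ with $N_3^*, N_4^*\sim 1$, and then the Bernstein factor $N_1/(N_2 N_3 N_4)\sim 1/(N_3 N_4)$ contributes $\sim 1$ to the sum rather than $N^{-1}$, so the argument does not produce the claimed decay. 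You need the sharper case-by-case bound to survive that regime.

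Two further points. First, you do not invoke any tool to deal with the symbol once the pointwise bound $B(N_2,N_3,N_4)$ is factored out; what remains is a nontrivial multiplier multiplied against the HLS kernel $|\xi_2+\xi_3|^{-(n-2)}$, and the paper uses a Coifman--Meyer type multilinear multiplier theorem (its Proposition~4.3, borrowed from \cite{ChHKY}) to convert the frequency-side expression into a product of $L^{p_i}$ norms with the HLS scaling built into the exponent relation $\frac 1p+1=\frac 2n+\sum\frac{1}{p_i}$. With that black box the kernel is handled uniformly, which is why your concern about a ``resonant'' regime where $|\xi_1-\xi_2|\ll N_1^*$ competes with the commutator gain never materializes; the dyadic difficulty is entirely in the frequency comparison cases of the symbol, not in the kernel. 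Second, your reduction of the sexalinear term to the quadrilinear one is too quick: one must prove a lemma of the form $\|P_M I((|x|^{-2}\ast\phi_1\phi_2)\phi_3)\|_{L^4_t L^{2n/(n-1)}_x}\lesssim M\,(Z_I(T))^3$, which is what lets the nonlinear piece play the same role as the $\Delta I\phi$ piece with the Bernstein factor $M$ substituting for $N_1$; this is nontrivial (it is Lemma~3.3 in the paper) and is not subsumed by Theorem~\ref{thm2.1}.
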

\begin{proof}[Proof of Proposition~\ref{ACL}] \text{ } \\
We compute in the frequency space. Applying the Parseval formula to $E_T$ in \eqref{p}, we obtain
\begin{eqnarray}\label{E1}
 E_T
&=& \text{Re} \int_0^T\!\!\!\int_{\sum_{j=1}^4 \xi_j=0} \left(1-
\frac{m(\xi_2+\xi_3+\xi_4)}{m(\xi_2)m(\xi_3)m(\xi_4) }\right)\nonumber \\
& & \quad \times \ |\xi_2+\xi_3|^{-(n-2)}\, \w{ \overline {\pa_t
I\phi}}(\xi_1)\, \w{\overline{I\phi}}(\xi_2) \, \w{I\phi}(\xi_3)
\, \w{I\phi(\xi_4)}\ d\xi_1 \, d\xi_2 \, d\xi_3 \, d\xi_4\, dt.
\end{eqnarray}
\noindent Now if we use equation (\ref{SP}) to substitute for
$\pa_t I\phi$ in (\ref{E1}), then it is split into two terms
 as follows:
\begin{eqnarray*}
 E_a &\equiv& \bigg|
 \int_0^T\!\!\!\int_{\sum_{j=1}^4 \xi_j=0} \left(1-
\frac{m(\xi_2+\xi_3+\xi_4)}{m(\xi_2)m(\xi_3)m(\xi_4) }\right)\,  |\xi_2+\xi_3|^{-(n-2)} \\
& & \quad \times \ \w{\Delta \overline { I\phi}}(\xi_1)\,
\w{\overline{I\phi}}(\xi_2) \, \w{I\phi}(\xi_3) \,
\w{I\phi}(\xi_4)\ d\xi_1 \, d\xi_2 \, d\xi_3 \,
d\xi_4 \, dt\, \bigg|,\nonumber\\
E_b &\equiv& \bigg| \int_0^T\!\!\!\int_{\sum_{j=1}^4 \xi_j=0}
\left(1-
\frac{m(\xi_2+\xi_3+\xi_4)}{m(\xi_2)m(\xi_3)m(\xi_4) }\right)\,  |\xi_2+\xi_3|^{-(n-2)} \\
& & \quad \times \ \w{\overline { (I(\too *
|\phi|^{2}\phi))}}(\xi_1)\, \w{\overline{I\phi}}(\xi_2) \,
\w{I\phi}(\xi_3) \, \w{I\phi}(\xi_4) \ d\xi_1 \, d\xi_2 \, d\xi_3
\, d\xi_4\, dt \, \bigg|.\nonumber
\end{eqnarray*}

In both cases, we break down $\phi$ into Littlewood-Paley pieces $\phi_j$,
each localized in $2^{k_j}$ in frequency, $\langle\xi_j\rangle\sim2^{k_j}=N_j$, $k_j=0,1,2,\cdots$,
and then use a version of Coifman-Meyer estimate for a class of multiplier operators.  \\
\begin{Prop}[Proposition  $6.1$ in \cite{ChHKY}]\label{multiplier}
  Let $\sigma(\xi)$ be infinitely
differentiable so that for all $\alpha \in N^{nk}$ and all $\xi=
(\xi_1,\dots, \xi_k) \in \bbr^{nk}$. Then there is a constant
$c(\alpha)$ with
\begin{equation}\label{cm}
|\pa_{\xi}^{\alpha}\sigma(\xi)| \le c(\alpha)\,
(1+|\xi|)^{-|\alpha|}.
\end{equation}
Let the multi-linear operator $\Lambda$ be given
\begin{align*}
[\Lambda(f_1, \cdots, f_k)](x) &= \int_{\bbr^{nk}} e^{ix(\xi_1+\cdots+\xi_k)}
\sigma(\xi_1,\cdots,\xi_k) \,   |\xi_2+\xi_3|^{-(n-2)} \, \hat
f_1(\xi_1)\,
\hat f_2(\xi_2) \cdots \hat f_k(\xi_k) \ d\xi_1 \, \cdots \,d\xi_k
\end{align*}
 for $k\ge 2$.
  Then we have
  \[ \|\Lambda(f_1, \cdots, f_k)\|_{L^p} \lesssim \|f_1\|_{L^{p_1}}\|f_2\|_{L^{p_2}} \cdots \| f_k \|_{L^{p_k}}\]
  where $(p,p_i)$ is related by $\frac 1p+1= \frac 2n +\sum_{i=1}^k \frac{1}{p_i}$.
\end{Prop}
We first estimate a pointwise bound on the symbol
$$ \left| 1- \frac{m(\xi_2+\xi_3+\xi_4)}{m(\xi_2)m(\xi_3)m(\xi_4)}\right| \leq B(N_2,N_3,N_4) $$
Factoring $B(N_1,N_2,N_3)$ out of the integral in $E_T$, it leaves a symbol $\sigma_1$,
 which satisfies the condition of Proposition~\ref{multiplier}, as the following:
\begin{align*}
 \sum_{N_1,N_2,N_3,N_4}& B(N_2,N_3,N_4) \int_0^T\int_{\R^n} [\Lambda(\Delta I\phi_1, I\phi_2,I\phi_3)]\widehat{}(\xi_4)\widehat{I\phi_4}(\xi_4)d\xi_4dt \\
+& \sum_{N_1,N_2,N_3,N_4} B(N_2,N_3,N_4) \int_0^T\int_{\R^n} [\Lambda(IP_{N_1}(|x|^{-2}\ast|\phi|^2\phi), I\phi_2,I\phi_3)]\widehat{}(\xi_4)\widehat{I\phi_4}(\xi_4)d\xi_4dt
\end{align*}
where $$ [\Lambda(f,g,h)](x) = \int_{\bbr^{3n}} e^{ix(\xi_1+\xi_2+\xi_3)}
\sigma_1(\xi_1,\xi_2,\xi_3) \, |\xi_2+\xi_3|^{-(n-2)} \, \hat
f(\xi_1)\,
\hat g(\xi_2) \, \hat h(\xi_3) \ d\xi_1 \, d\xi_2 \,d\xi_3 $$ and
$$ \sigma_1(\xi_1,\xi_2,\xi_3)= 1-\frac{m(\xi_1)}{m(\xi_2)m(\xi_3)m(\xi_1+\xi_2+\xi_3)} \Big{/} \Big| 1-\frac{m(\xi_1)}{m(\xi_2)m(\xi_3)m(\xi_1+\xi_2+\xi_3)}  \Big|.$$

We shall show that
$$ E_a + E_b \lesssim N^{-1+}(Z_I(T))^P $$
for some $P>0$. \\
For this aim, we claim that
\begin{align}
 & \sum _{N_1, N_2, N_3, N_4} \int_0^T\!\!\!\int_{\bbr^n} B(N_2,N_3,N_4) \big[ \,
\Lambda (\Delta
I\phi_1,I\phi_2,I\phi_3)\, \big] (x,t) \, I\phi_4(x,t) \ dx \, dt \label{f1}\\
& \ + \sum _{N_1, N_2, N_3, N_4}  \int_0^T\!\!\!\int_{\bbr^n} B(N_2,N_3,N_4)
\big[ \, \Lambda (IP_{N_1}(\too \ast |\phi|^{2}\phi),I\phi_2,I\phi_3) \,
\big](x,t) \, I\phi_4(x,t)
\ dx \, dt \label{f2}\\
&\lesssim \ N^{-1+} \, (Z_{I}(T)^{4} + Z_{I}(T)^{6}). \nonumber
\end{align}

From Proposition~\ref{multiplier}, we have
\begin{equation}\label{multilinear1}
\|\Lambda(f,g,h)\|_{L^p} \lesssim \|f\|_{L^{p_1}}\|g\|_{L^{p_2}}\|h\|_{L^{p_3}}
\end{equation}
where $ \frac 1p = \frac 2n -1 +\frac 1{p_1} +\frac 1{p_2} + \frac 1{p_3} $.\\
For the first term $E_a$, we use \eqref{multilinear1} and H\"older inequality to get

\begin{align}
\label{form E1a}
\Big|\int_0^T \int_{\bbr^n} B(N_2,N_3,N_4) \big[
\Lambda &(\Delta I\phi_1,I\phi_2,I\phi_3) \big] (x,t)  I\phi_4(x,t)  dx  dt
\Big| \\
\lesssim &\|\Delta I\phi_1\|_{L^{q_1}_tL^{p_1}_x} \|I\phi_2\|_{L^{q_2}_tL^{p_2}_x}\|I\phi_3\|_{L^{q_3}_tL^{p_3}_x} \|I\phi_4\|_{L^{q_4}_tL^{p_4}_x}  \nonumber
\end{align}

where $\sum \frac 1{p_i} + \frac 2n -1 = 1$, $\sum \frac 1{q_1} =1$.
Choosing $ \frac 1{p_i} = \frac {n-1}{2n}$, $\frac 1{q_i} = \frac 14$, and using Bernstein inequlity, we obtain
$$ \eqref{f1} \lesssim B(N_2,N_3,N_4) \frac{N_1}{N_2N_3N_4}(Z_I(T))^4.  $$

We reduce to show
\begin{equation}\label{N-1}
\sum_{N_1,N_2,N_3,N_4} B(N_2,N_3,N_4) \frac{N_1}{N_2N_3N_4} \lesssim N^{-1+\epsilon}
\end{equation}
By symmetry we may assume $N_2\geq N_3\geq N_4$. Then it suffices to consider the following three cases.\\
\textbf{Case 1}: $N\gg N_2$. We have $m(\xi_i) =1 $ since $\sum_i \xi_i =0$. So, the symbol
$$ \left| 1- \frac{m(\xi_1)}{m(\xi_2)m(\xi_3)m(\xi_4)} \right| = 0. $$
\textbf{Case 2}: $N_2 \geq N \gg N_3 \geq N_4$. Since $\sum_i\xi_i = 0$, we have $N_1 \sim N_2$. By the mean value theorem,
\begin{align*}
\Big|1- \frac{m(\xi_1)}{m(\xi_2)m(\xi_3)m(\xi_4)} \Big| &= \Big| \frac{m(\xi_2)-m(\xi_2+\xi_3+\xi_4)}{m(\xi_2)}  \Big| \\
 & \lesssim \frac{|\nabla m(\xi_2)\cdot (\xi_3+\xi_4)|}{m(\xi_2)} \lesssim \frac{N_3}{N_2}.
\end{align*}
Thus, 
\begin{align*}
 \eqref{f1} &\lesssim \frac{1}{N_2N_4}(Z_I(T))^4 \\
  & \lesssim N^{-1+\epsilon}N_2^{-\epsilon}(Z_I(T))^4
\end{align*}
Summing up with $N_4,N_3,N_2$, we have \eqref{N-1}. \\
\textbf{Case 3}: $N_2\geq N_3\gtrsim N$. In this case we need to consider two subcases $N_1\sim N_2$ and $N_2 \gg N_1$ since by $\sum_i\xi_i = 0$ the case $ N_1 \gg N_2$ cannot happen. \\
For the first case, $N_1\sim N_2$, we estimate
\begin{align*}
  \Big|1- \frac{m(\xi_1)}{m(\xi_2)m(\xi_3)m(\xi_4)}  \Big|\frac{N_1}{N_2N_3N_4}  &\lesssim \frac{1}{N_3m(\xi_3)N_4m(\xi_4)}\\
   = \frac{1}{N_3(\frac{N}{|\xi_3|})^{1-s}N_4m(\xi_4)} &\sim \frac {N^s}{N_3^s}\cdot\frac{1}{N}\frac{1}{N_4m(\xi_4)} \\
  & \lesssim N^{-1+\epsilon}N_3^{-\epsilon}
\end{align*}
since $x m(x) \geq 1$ for $x \geq 1$. We can sum up $N_4,N_3$ directly. But when summing up $N_2$, we use the Cauchy-Schwartz inequality with $\phi_i=P_{N_i}I\phi$ as follows:
$$\sum_N P_N\nabla I\phi\cdot P_N\nabla I\phi  \leq \big(\sum_N(P_N\nabla I\phi)^2 \big).$$
In the second case, $N_2 \gg N_1$, again by $\sum_i \xi_i =0$, we have $N_2\sim N_3$.
\begin{align*}
\Big|1- \frac{m(\xi_2+\xi_3+\xi_4)}{m(\xi_2)m(\xi_3)m(\xi_4)} \Big|&\frac{N_1}{N_2N_3N_4} \lesssim \frac{m(\xi_1)}{m(\xi_2)^2m(\xi_4)}\frac{N_1}{N_2N_3N_4} \\
 &\sim N_1m(\xi_1)\frac{1}{N^2}\frac{N^{2s}}{N_2^{2s}}\cdot \frac{1}{N_4m(\xi_4)}
\end{align*}
For our purpose, we want to show
$$ \Big(\frac{N}{N_2}\Big)^{2s} N_1m(\xi_1) \lesssim N. $$
If $N_1 \leq N$, then $m(\xi_1) =1$ and this is true. If $N_1 \gtrsim N $, then
$$ \Big(\frac{N}{N_2}\Big)^{2s} N_1m(\xi_1) = \frac{N^{1+s}}{N_2^s}\cdot \Big(\frac{N_1}{N_2}\Big)^s = N\frac{(NN_1)^s}{N_2^{2s}} \leq N. $$
This conclude the proof of \eqref{f1}.
Now we turn to the estimate of $E_b$. The above analysis is applied to $E_b$, once we show the following lemma.
\begin{lemma}
\begin{equation}
  \|P_{M}I([|x|^{-2}\ast (\phi_1\phi_2)]\phi_3)\|_{L^4_tL^{\frac{2n}{n-1}}_x}  \lesssim M (Z_I(T))^3
\end{equation}
\end{lemma}
\begin{proof}
We divide $\phi$ into $\phi= \phi_{lo} + \phi_{hi}$ where
\begin{align*}
  &\text{supp}\, \widehat{\phi}_{lo}(\xi,t) \subseteq \{|\xi| < 2\} \\
  &\text{supp}\, \widehat{\phi}_{hi}(\xi,t) \subseteq \{|\xi| > 1\}
\end{align*}
In the case that all $\phi$'s are $\phi_{lo}$ we simply estimate
\begin{align*}
  \|P_M\big(I[(|x|^{-2}&\ast\phi_{lo}\bar{\phi}_{lo})\phi_{lo}]   \big)  \|_{L^4_tL^{\frac{2n}{n-1}}_x} =   \|(|x|^{-2}\ast\phi_{lo}\bar{\phi}_{lo})\phi_{lo} \|_{L^4_tL^{\frac{2n}{n-1}}_x}^3  \\
  & \lesssim \|\phi_{lo}\|_{L^{12}_tL^{\frac{6n}{3n-5}}_x}^3 \\
  & \lesssim \|\langle\nabla\rangle I\phi_{lo}\|_{L^{12}_tL^{\frac{6n}{3n-1}}_x}^3 \\
  & \lesssim M(Z_I(T))^3.
\end{align*}
When all $\phi$'s are $\phi_{hi}$, we use Bernstein inequality, Sobolev embedding and the Leibniz rule as following:
\begin{align*}
\|\frac 1M &P_M\big(I[(|x|^{-2}\ast\phi_{hi}\bar{\phi}_{hi})\phi_{hi}] \big)  \|_{L^4_tL^{\frac{2n}{n-1}}_x} \\
  & \lesssim \|\nabla^{-1}P_M\big(I[(|x|^{-2}\ast\phi_{hi}\bar{\phi}_{hi})\phi_{hi}] \big)  \|_{L^4_tL^{\frac{2n}{n-1}}_x}  \\
  & \lesssim \|\langle\nabla\rangle^{\frac{n}{3n-4}}I[(|x|^{-2}\ast\phi_{hi}\bar{\phi}_{hi})\phi_{hi}]\|_{L^4_tL^{\frac{3n^2+n-4}{(3n-4)2n}}_x} \\
  & \lesssim \|\langle\nabla\rangle^{\frac{n}{3n-4}}I\phi_{hi} \|^3_{L^{12}_tL^p_x} \\
  & \text{         where     } {\footnotesize \frac 3p= -\frac 2n + 1 +\frac{3n^2+1-4}{(3n-4)2n} } \\
  & \lesssim \|\langle\nabla\rangle^{1}I\phi_{hi} \|^3_{L^{12}_tL^{\frac{6n}{3n-1}}_x} \\
  & \lesssim (Z_I(T))^3
\end{align*}

where we used $ \frac{1}{n}-\frac{1}{3n-4} \geq -\frac 1p + \frac{3n-1}{6n}$.\\
The remaining $lo-hi$ cases are controlled in a similar manner to the $hi-hi$ case. We omit the detail here.
\end{proof}
Hence, we have shown \eqref{f1}, \eqref{f2} and so conclude the proof.
\end{proof}

\section{\bf{Almost interaction Morawetz estimate in $\R^n$, $n\geq 3$}}\label{sec-3}
In this section, we show the \emph{almost} interaction Morawetz inequality.
Let us start by recalling the higher dimensional interaction Morawetz inequality for a general nonlinearity. The interaction Morawetz inequality was developed in \cite{C-K-S} in $\R^3$ and this higher dimensional extension was derived in \cite{TVZ}. We first recall higher dimensional interaction Morawetz inequality for a general nonlinearity.

\begin{lemma}[\cite{TVZ}, Proposition 5.5]
Let $\phi$ solve
$$ i\partial_t\phi + \frac 12\Delta\phi = \mathcal{N} $$
on $I\times \R^n$. Assume that $ \text{Im}(\mathcal{N}\overline{\phi}) =0 $. \\
Then, we have
\begin{align}\label{general mora}
 - \int_I \iint_{\R^n\times\R^n} & \Delta\big(\frac {1}{|y-x|}\big)|\phi(x,t)|^2|\phi(y,t)|^2dxdydt \\
& +2\int_I\iint_{\R^n\times\R^n}|\phi(x,t)|^2\frac{y-x}{|y-x|}\cdot\{\mathcal{N},\phi\}(y,t) dxdydt  \nonumber \\
& \lesssim \|\phi_0\|^2_{L^2_x}\|\phi\|^2_{L^\infty_t\dot{H}^{\frac 12}([0,T]\times \bbr^n)}  \nonumber
\end{align}
where $\{f,g\} = \text{Re}(f\nabla \overline{g} - g\nabla \overline{f}) $.
\end{lemma}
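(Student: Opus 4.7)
The natural strategy is a two-particle virial argument: define the interaction Morawetz action
\begin{equation*}
M(t) := 2\,\mathrm{Im} \iint_{\R^n \times \R^n} |\phi(x,t)|^2\, \bar{\phi}(y,t)\, \nabla_y\phi(y,t) \cdot \frac{y-x}{|y-x|}\, dx\, dy,
\end{equation*}
and derive the claim by observing that the left-hand side of the stated inequality equals $M(T)-M(0)$, while $2\sup_t|M(t)|$ is controlled by the right-hand side. To carry this out I would first compute $\partial_t M$ using the equation, treating it as a virial identity on $\R^{2n}$ for the tensor $u(x,y,t)=\phi(x,t)\phi(y,t)$ with the convex weight $a(x,y)=|x-y|$. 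Differentiating both copies of $\phi$ in time and substituting $i\partial_t\phi=-\tfrac12\Delta\phi+\mathcal{N}$, the kinetic contributions, after integration by parts in both the $x$ and $y$ variables, collapse (because $-\Delta(1/|z|)\ge 0$ distributionally in $\R^n$ for $n\ge 3$) to produce exactly
\begin{equation*}
-\iint \Delta\!\bigl(\tfrac{1}{|y-x|}\bigr)|\phi(x,t)|^2|\phi(y,t)|^2\,dx\,dy.
\end{equation*}
The nonlinear contributions split into two groups: one assembles into the stated momentum-bracket integrand $|\phi(x)|^2\,\tfrac{y-x}{|y-x|}\cdot\{\mathcal{N},\phi\}(y)$, while the remaining pieces all contain the scalar $\mathrm{Im}(\mathcal{N}\bar\phi)$ and vanish by hypothesis.

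Integrating this identity from $0$ to $T$ shows that the left-hand side of the claim equals $M(T)-M(0)$, so the proof reduces to the uniform-in-time bound
\begin{equation*}
\sup_{t\in I}|M(t)|\ \lesssim\ \|\phi_0\|_{L^2_x}^2\,\|\phi\|_{L^\infty_t\dot H^{1/2}_x}^2.
\end{equation*}
This is a Fourier-side estimate. The inner $x$-integral $f\mapsto \int f(x)\tfrac{y-x}{|y-x|}\,dx$ is a convolution with the degree-zero kernel $z\mapsto -z/|z|$, whose Fourier symbol is a constant multiple of $\xi/|\xi|^{n+1}$, i.e.\ a Riesz transform composed with a fractional integral. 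Writing the whole double integral as a bilinear pairing on $\R^n$ between $|\phi|^2$ and $\bar\phi\nabla\phi$, and then redistributing one half-derivative onto each copy of $\phi$ via duality and Plancherel, the expression reduces to an $L^2$ pairing controlled by $\|\phi\|_{L^2}^2\,\||\nabla|^{1/2}\phi\|_{L^2}^2$. Mass conservation replaces the first factor by $\|\phi_0\|_{L^2}^2$.

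The main obstacle is the virial computation itself: one must bookkeep all commutators generated by differentiating both copies of $\phi$ in $t$, carry out the integrations by parts in both spatial variables, and verify that the surviving kinetic terms really contract to $-\Delta_y(1/|y-x|)$ rather than some other weight, while every term involving $\mathrm{Im}(\mathcal{N}\bar\phi)$ is properly isolated so the hypothesis may kill it. Once this identity is in place, the remaining steps (the Fourier bound on $M(t)$ and the time integration) are essentially mechanical.
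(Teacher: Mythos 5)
Your overall strategy matches the proof in \cite{TVZ}: define the interaction Morawetz action $M(t)$, show by the two-particle virial computation that the left-hand side of the inequality equals $M(T)-M(0)$ (with the $\mathrm{Im}(\mathcal N\bar\phi)$ pieces vanishing by hypothesis), and then close via the a priori bound $\sup_t|M(t)|\lesssim\|\phi_0\|_{L^2}^2\|\phi\|_{L^\infty_t\dot H^{1/2}_x}^2$. The virial identity part is correct. Where the proposal goes wrong is in calling that a priori bound on $M(t)$ "essentially mechanical" and in the sketch given for it; that bound is in fact the delicate part of the lemma.

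Your claim that one can "write the double integral as a bilinear pairing between $|\phi|^2$ and $\bar\phi\nabla\phi$ and redistribute one half-derivative onto each copy via duality and Plancherel" does not produce the $\dot H^{1/2}$ bound. The naive route — bound the inner $x$-integral pointwise by $\|\phi\|_{L^2}^2$ using $\bigl|\tfrac{y-x}{|y-x|}\bigr|\le1$, then estimate $\|\mathrm{Im}(\bar\phi\nabla\phi)\|_{L^1}$ — gives only $\|\phi\|_{L^2}\|\nabla\phi\|_{L^2}$, i.e.\ an $\dot H^1$ bound. And the improvement cannot come from a sharper $L^1$ bound on the momentum density: taking $\phi = N_1^{-1/2}\chi e^{iN_1x_1}+N_2^{-1/2}\chi e^{iN_2x_1}$ for a fixed real bump $\chi$ with $N_1\gg N_2$, one has $\|\phi\|_{\dot H^{1/2}}^2\sim 1$ while the cross terms in $\mathrm{Im}(\bar\phi\nabla\phi)$ contribute a quantity of size $(N_1/N_2)^{1/2}$ to its $L^1$ norm, which is unbounded. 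So $\|\mathrm{Im}(\bar\phi\nabla\phi)\|_{L^1}\not\lesssim\|\phi\|_{\dot H^{1/2}}^2$, and any argument that factors $M(t)$ through this quantity is doomed.

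What actually rescues the estimate is the oscillatory cancellation in the pairing of the bounded vector kernel $\tfrac{y-x}{|y-x|}$ against the momentum density: the inner $y$-integral, for fixed $x$, is the classical one-particle Morawetz quantity for $\phi$ recentered at $x$, and the translation-invariant Lin--Strauss-type a priori bound $\bigl|\mathrm{Im}\int\bar\phi(y)\tfrac{y}{|y|}\cdot\nabla\phi(y)\,dy\bigr|\lesssim\|\phi\|_{\dot H^{1/2}}^2$ holds uniformly in the center; integrating against $|\phi(x)|^2\,dx$ then gives the stated bound. This one-particle estimate is itself nontrivial (it requires more than Cauchy--Schwarz; the anti-self-adjoint structure of $\hat r\cdot\nabla$ and a Hardy-type argument enter), and must be stated and proved or explicitly cited rather than folded into "Plancherel bookkeeping." Your proposal also has the emphasis reversed: the virial computation, while long, is mechanical, whereas the $\dot H^{1/2}$ bound is where the real idea lives.
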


First, we apply \eqref{general mora} to the solution to \eqref{SP},
where $\mathcal{N} = (|x|^{-2}\ast |\phi|^2)\phi$. A computation shows that the second term is positive. 
\begin{align*}
\text{the second term of \eqref{general mora}} &=
-2 \int |\phi(x,t)|^2\frac{y-x}{|y-x|}\cdot \nabla_y\frac{1}{|y-z|^2}|\phi(z,t)|^2|\phi(y,t)|^2 dxdydz  \\
& =4\int |\phi(x,t)|^2|\phi(y,t)|^2|\phi(z,t)|^2 \frac{y-x}{|y-x|}\cdot\frac{y-z}{|y-z|^4} dxdydz \\
& =2\int |\phi(x,t)|^2|\phi(y,t)|^2|\phi(z,t)|^2 \big(\frac{y-x}{|y-x|}-\frac{z-x}{|z-x|}\big)\cdot\frac{y-z}{|y-z|^3} dxdydz \\
& \geq 0
\end{align*}

By the same analysis as in \cite{TVZ}, we obtain several estimates of space-time $L^q_tL^p_x$-norms.
\begin{Prop}
  Let $\phi(t,x)$ be a classical solution to \eqref{SP}. Then we have \\
  when $n=3$,
  \begin{equation}\label{n=3 L^4}
    \|\phi\|_{L^4_{t,x}(\R\times\R^3)} \lesssim \|\phi_0\|^{\frac 12}_{L^2_x(\R^3)}\|\phi(t)\|^{\frac 12}_{L^\infty_t\dot{H}^{\frac 12}_x(\R\times\R^3)}
  \end{equation}
  when $n\geq 4$,
 \begin{equation}
 \||\nabla|^{-\frac{n-3}{4}}\phi\|_{L^4_{t,x}([0,T]\times\R^n)} \lesssim \|\phi_0\|^{\frac 12}_{L^2}\|\phi(t)\|^{\frac 12}_{L^\infty_t\dot{H}^{\frac 12}_x([0,T]\times \bbr^n)}
 \end{equation}
  \begin{align}
  &\|\phi\|_{L^{2(n-1)}_tL^{\frac{2(n-1)}{n-2}}_x([0,T]\times \bbr^n)}\le \|\phi_0\|_{L^2_x}^{\frac 12}\|\phi\|^{\frac{n-2}{n-1}}_{L_t^{\infty}\dot{H}^{\frac 12}_x ([0,T]\times \bbr^n)} \\
  \label{mora}
  &\|\phi\|_{\pair([0,T]\times \bbr^n)}\le T^{\frac{n-2}{4(n-1)}}
  \|\phi_0\|_{L^2_x}^{\frac 12}\|\phi\|^{\frac{n-2}{n-1}}_{L_t^{\infty}\dot{H}^{\frac 12}_x ([0,T]\times \bbr^n)}.
\end{align}
\end{Prop}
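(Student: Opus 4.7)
The plan is to apply the general interaction Morawetz identity of the preceding Lemma to $\phi$, with $\mathcal{N}=(|x|^{-2}\ast|\phi|^2)\phi$, and then translate the two terms on the left-hand side into the four asserted Lebesgue space bounds. Since $V:=|x|^{-2}\ast|\phi|^2$ and $|\phi|^2$ are real, $\mathrm{Im}(\mathcal{N}\bar\phi)=V|\phi|^2-V|\phi|^2=0$, so the lemma applies. Everything then reduces to (i) showing that the nonlinear ``bracket'' term can be dropped because it is nonnegative, and (ii) identifying the remaining linear term involving $-\Delta(1/|y-x|)$.

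For (i), a short expansion gives $\{\mathcal{N},\phi\}(y)=-|\phi(y)|^2\nabla V(y)$, which after substituting $\nabla V$ and symmetrizing in the interior variables $y\leftrightarrow z$ rewrites the nonlinear term as a positive constant times
\begin{equation*}
\iiint |\phi(x)|^2|\phi(y)|^2|\phi(z)|^2\left(\frac{y-x}{|y-x|}-\frac{z-x}{|z-x|}\right)\cdot\frac{y-z}{|y-z|^4}\,dx\,dy\,dz.
\end{equation*}
The integrand is pointwise nonnegative by monotonicity of the subgradient of the convex function $p\mapsto|p|$, i.e.\ $(\nabla|p_1|-\nabla|p_2|)\cdot(p_1-p_2)\ge 0$ applied with $p_1=y-x$, $p_2=z-x$; this is exactly the computation displayed in the paragraph preceding the Proposition.

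For (ii) one computes $-\Delta|y-x|^{-1}$ in $\bbr^n$: it equals $4\pi\delta_{y=x}$ when $n=3$ and $(n-3)|y-x|^{-3}$ when $n\ge 4$. In dimension three this immediately produces $\int_I\|\phi(t)\|_{L^4_x}^4\,dt\lesssim\|\phi_0\|_{L^2}^2\|\phi\|_{L^\infty_t\dot H^{1/2}}^2$, which is \eqref{n=3 L^4} after a fourth root. For $n\ge 4$ one obtains
\begin{equation*}
\int_I\!\!\iint_{\bbr^n\times\bbr^n}\frac{|\phi(x,t)|^2|\phi(y,t)|^2}{|x-y|^3}\,dx\,dy\,dt\lesssim\|\phi_0\|_{L^2}^2\|\phi\|_{L^\infty_t\dot H^{1/2}}^2,
\end{equation*}
and by Plancherel the inner double integral equals, up to a dimensional constant, $\||\nabla|^{-(n-3)/2}(|\phi|^2)\|_{L^2_x}^2$. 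A bilinear/square-function comparison in frequency (of the kind employed in \cite{TVZ}) then dominates $\||\nabla|^{-(n-3)/4}\phi\|_{L^4_x}^4$ by this quantity, yielding the second inequality.

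The last two estimates are then obtained by interpolation and H\"older in time. Complex interpolation between the Morawetz bound $\phi\in L^4_t\dot W^{-(n-3)/4,4}_x$ and the trivial embedding $\phi\in L^\infty_t\dot H^{1/2}_x$, with parameter $\theta=(n-3)/(n-1)$, cancels the Sobolev exponent ($s=0$) and places $\phi\in L^{2(n-1)}_tL^{2(n-1)/(n-2)}_x$ with the asserted powers of mass and $\dot H^{1/2}$-norm. The final inequality \eqref{mora} is then an immediate consequence of H\"older in time on $[0,T]$, since $\tfrac{n}{4(n-1)}-\tfrac{1}{2(n-1)}=\tfrac{n-2}{4(n-1)}$, producing the prefactor $T^{(n-2)/(4(n-1))}$. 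The principal technical step is the $n\ge 4$ bilinear comparison linking $\iint|\phi|^2|\phi|^2/|x-y|^3$ to $\||\nabla|^{-(n-3)/4}\phi\|_{L^4}^4$, the point at which the argument genuinely departs from the familiar three-dimensional Morawetz calculation.
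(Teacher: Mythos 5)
Your proposal is correct and follows essentially the same route as the paper, which itself presents only a sketch and defers the key analytic step to Lemma 5.6 of \cite{TVZ}; you flesh out the same outline (verification of $\mathrm{Im}(\mathcal{N}\bar\phi)=0$, the positivity-by-symmetrization argument for the nonlinear bracket term, the split $-\Delta|x|^{-1}$ into $\delta$ for $n=3$ versus $(n-3)|x|^{-3}$ for $n\ge 4$, then interpolation with the trivial $L^\infty_t\dot H^{1/2}_x$ bound followed by H\"older in time). One small remark: your interpolation with $\theta=(n-3)/(n-1)$ actually produces the mass exponent $\tfrac{1}{n-1}$, not $\tfrac12$; this agrees with the version used later in \eqref{Error} and is scale-consistent, so the $\tfrac12$ in the stated proposition is a typo that your derivation silently corrects rather than reproduces.
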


\begin{proof}
A detailed proof is found in \cite{TVZ}, Section 5. Here we give a sketch. \\
In dimension $n=3$, we have formally $-\Delta\frac{1}{|x|} = 4\pi\delta$, and then \eqref{n=3 L^4} follows. \\
In higher dimension, $n\geq 4$, we obtain $ -\Delta\frac{1}{|x|}= \frac{n-3}{|x|^3}$. A convolution with $\frac{1}{|x|^3}$ is essentially to take the fractional derivative $|\nabla|^{-(n-3)}$. Hence we obtain from \eqref{general mora}
$$ \|\nabla^{-\frac{n-3}{2}}|\phi|^2\|_{L^2_{t,x}([0,T]\times \bbr^n)} \lesssim \|\phi_0\|^{\frac 12}_{L^2_x(\R^n)}\|\phi\|^{\frac 12}_{L^\infty_t\dot{H}^{\frac 12}([0,T]\times\R^n)}. $$
From Lemma 5.6 in \cite{TVZ}

\begin{equation}\label{mora1}
 \||\nabla|^{-\frac{n-3}{4}}\phi\|_{L^4_{t,x}([0,T]\times \bbr^n)} \lesssim \|\phi_0\|^{\frac 12}_{L^2_x(\R^n)}\|\phi\|^{\frac 12}_{L^\infty_t\dot{H}^{\frac 12}([0,T]\times\R^n)}.
\end{equation}
Interpolation between \eqref{mora1} and the trivial estimate
$$ \||\nabla^{\frac 12}\phi\|_{L^\infty_tL^2_x} \leq \|\phi\|_{L^\infty_t\dot{H}^{\frac 12}_x}  $$
and using the H\"older's inequality in time we have

\begin{align}\label{preI}
  &\|\phi\|_{\pair([0,T]\times \bbr^n)}\le T^{\frac{n-2}{4(n-1)}}\|\phi_0\|_{L^2_x}^{\frac 12}\|\phi\|^{\frac{n-2}{n-1}}_{L_t^{\infty}\dot{H}^{\frac 12}_x ([0,T]\times \bbr^n)}.
\end{align}

\end{proof}

For the initial data below $\dot{H}^{\frac 12}$, the above estimate is not useful since $\dot{H}^{\frac 12}$-norm of the solution may not be finite. To overcome this difficulty, we use the interaction Morawetz inequality into the smoothed solution $I\phi$. Write the $I$-Hartree equation as the following:
\begin{align*}
 iI\phi_t& + \frac 12 \Delta I\phi \\
 &= (|x|^{-2}\ast I\phi\overline{I\phi})I\phi + \big[I((|x|^{-2}\ast \phi\overline{\phi})\phi)  - (|x|^{-2}\ast I\phi\overline{I\phi})I\phi \big]  \\
 &=: \mathcal{N}_{good} + \mathcal{N}_{bad}.
\end{align*}
Then using \eqref{general mora} we obtain
\begin{align}\label{I mora}
\begin{aligned}
 - \int_I \iint_{\R^n\times\R^n} & \Delta\big(\frac {1}{|y-x|}\big)|I\phi(x,t)|^2|I\phi(y,t)|^2dxdydt \\
& + 2\int_I\iint_{\R^n\times\R^n}|I\phi(x,t)|^2\frac{y-x}{|y-x|}\cdot\{\mathcal{N}_{good},I\phi\}(y,t) dxdydt \\
& + 2\int_I\iint_{\R^n\times\R^n}|I\phi(x,t)|^2\frac{y-x}{|y-x|}\cdot\{\mathcal{N}_{bad},I\phi\}(y,t) dxdydt \\
& \lesssim \|I\phi\|^2_{L^\infty_tL^2_x}\|I\phi\|^2_{L^\infty_t\dot{H}^{\frac 12}([0,T]\times \bbr^n)} \\
& + \int_0^T\iint_{\bbr^n\times\bbr^n} |\mbox{Im}\,(\mathcal N_{bad}\overline{I\phi}(t, y)) \nabla(I\phi(t, x)) I\phi(t,x)|dxdydt
\end{aligned}
\end{align}
By the same computation as above, one can see the second term of \eqref{I mora} is positive. We wish the third term involving $\mathcal{N}_{bad}$ to be small.
Similarly to \eqref{preI} we have
\begin{align}\begin{aligned}\label{Error}
\| I\phi&\|_{\pair([0,T]\times \bbr^n)}\\
& \lesssim T^{\frac{n-2}{4(n-1)}}\left(    \|I\phi\|_{L_t^\infty L^2_x}^{\frac{1}{n-1}}\|I\phi\|_{L_t^{\infty}\dot{H}^{\frac 12}_x }^{\frac{n-2}{n-1}} + \|I\phi\|_{L_t^{\infty}\dot{H}^{\frac 12}_x }^{\frac{2n-6}{2n-3}} +\mbox{  Error}\right),
\end{aligned}\end{align}  where Error is defined in the lemma below.
\begin{lemma}\label{almost mora}
On a time interval $J$ where the local well-posedness in Theorem~\ref{thm2.1} holds true, we have that
\begin{align}\label{error estimate}
 \text{Error} = &\int_J\iint_{\R^n\times\R^n}|I\phi(x,t)|^2\frac{y-x}{|y-x|}\cdot\{\mathcal{N}_{bad},I\phi\}(y,t) dxdydt \\
  & + \int_J\iint_{\bbr^n\times\bbr^n} |\mbox{Im}\,(\mathcal N_{bad}\overline{I\phi}(t, y)) \nabla(I\phi(t, x)) I\phi(t,x)|dxdydt \nonumber\\& \lesssim \frac 1{N^{1-}}Z_I(J)^6 \nonumber
\end{align}
In particular, if we assume $\| \langle \nabla \rangle  I\phi_0\|_{L^2}\lesssim 1 $ and
$\| I\phi\|_{\pair(J\times \bbr^n)} \le \del$, then $$ Error \lesssim  \frac {1}{N^{1-}}.$$
\end{lemma}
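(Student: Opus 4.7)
The plan is to reduce the two integrals defining $\mathrm{Error}$ to a quadrilinear form that can be handled by the same Fourier-analytic machinery as the proof of Proposition~\ref{ACL}, with $\mathcal N_{bad}$ playing the role of the commutator. First I would use $\bigl|\frac{y-x}{|y-x|}\bigr|\le 1$ to decouple the $x$ and $y$ variables in each integrand, and observe that mass conservation gives $\|I\phi(t)\|_{L^2_x}^2\lesssim \|\phi_0\|_{L^2}^2\lesssim 1$. The two terms then reduce, after H\"older in space-time, to estimates of the form
\begin{align*}
\|\mathcal N_{bad}\|_{\tx{a'}{b'}}\,\|\nabla I\phi\|_{\tx{a}{b}}\qquad\text{and}\qquad
\|\mathcal N_{bad}\|_{\tx{a'}{b'}}\,\|\nabla I\phi\|_{\tx{a}{b}}\,\|I\phi\|_{L^\infty_tL^2_x},
\end{align*}
for a suitably chosen admissible pair $(a,b)$, where the Strichartz norms of $\langle\nabla\rangle I\phi$ are dominated by $Z_I(J)$.

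The essential task is therefore to prove $\|\mathcal N_{bad}\|_{\tx{a'}{b'}}\lesssim N^{-1+}Z_I(J)^3$. Passing to the Fourier side, the Fourier multiplier of $\mathcal N_{bad}$ on $\xi_1+\xi_2+\xi_3=\xi$ equals
\begin{align*}
\sigma_{bad}(\xi_1,\xi_2,\xi_3)=\bigl[m(\xi_1+\xi_2+\xi_3)-m(\xi_1)m(\xi_2)m(\xi_3)\bigr]\,|\xi_1+\xi_2|^{-(n-2)},
\end{align*}
which is exactly the commutator symbol treated in the proof of Proposition~\ref{ACL}. Decomposing each $\phi$ factor into Littlewood--Paley pieces $\phi_j=P_{N_j}\phi$, substituting $\phi_j=(N_j m(N_j))^{-1}\langle\nabla\rangle IP_{N_j}\phi$ to convert each $\phi$ into $\langle\nabla\rangle I\phi$, and applying the Coifman--Meyer type Proposition~\ref{multiplier} to the normalized symbol $\sigma_{bad}/|\sigma_{bad}|$, I would obtain a dyadic bound of the shape
\begin{align*}
\|P_{N_0}\mathcal N_{bad}\|_{\tx{a'}{b'}}\lesssim \tilde B(N_1,N_2,N_3)\,\frac{1}{N_1 m(N_1)\,N_2 m(N_2)\,N_3 m(N_3)}\,Z_I(J)^3,
\end{align*}
where $\tilde B$ is a pointwise bound for $|m(\xi_1+\xi_2+\xi_3)-m(\xi_1)m(\xi_2)m(\xi_3)|$.

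The main obstacle is the dyadic summation over $N_0,N_1,N_2,N_3$. Splitting into the same regimes as in the proof of Proposition~\ref{ACL}, namely $\tilde B\equiv 0$ when all $N_j\ll N$, $\tilde B\lesssim N_{\mathrm{low}}/N_{\mathrm{high}}$ in low-high interactions via the mean value theorem, and $\tilde B\lesssim m(\xi_1)/(m(\xi_2)m(\xi_3))$ in high-high interactions together with the constraint $N_0\sim N_j$ forced by $\sum\xi_j=-\xi$, each configuration yields a geometrically summable tail in the $N_j$ with a total prefactor $N^{-1+}$, the $\epsilon$ loss arising from the assumption $s>0$. Summing produces $\|\mathcal N_{bad}\|_{\tx{a'}{b'}}\lesssim N^{-1+}Z_I(J)^3$, and feeding this back into the reduction above yields $\mathrm{Error}\lesssim N^{-1+}Z_I(J)^6$. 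Under the stated smallness hypothesis Lemma~\ref{lem4.1} gives $Z_I(J)\lesssim \|\langle\nabla\rangle I\phi_0\|_{L^2}\lesssim 1$, whence $\mathrm{Error}\lesssim N^{-1+}$.
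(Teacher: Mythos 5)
Your overall strategy matches the paper's: decouple $x$ and $y$, use mass conservation on the $|I\phi(x)|^2$ weight, reduce Error to a norm of $\mathcal N_{bad}$ times powers of $Z_I$, pass to Fourier space, decompose dyadically, apply the Coifman--Meyer bound from Proposition~\ref{multiplier}, and estimate the commutator symbol by the same three-case analysis as in Proposition~\ref{ACL}.

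There is, however, one real gap. The Morawetz bracket $\{\mathcal N_{bad},I\phi\}=\mathrm{Re}\bigl(\mathcal N_{bad}\nabla\overline{I\phi}-I\phi\,\nabla\overline{\mathcal N_{bad}}\bigr)$ contains a full derivative falling on $\mathcal N_{bad}$, so the first Error term does \emph{not} reduce to $\|\mathcal N_{bad}\|_{\tx{a'}{b'}}\|\nabla I\phi\|_{\tx{a}{b}}$ alone: you also need to control $\|\nabla\mathcal N_{bad}\|_{L^1_tL^2_x}$. Correspondingly, your trilinear symbol $\sigma_{bad}=\bigl[m(\xi_1+\xi_2+\xi_3)-m(\xi_1)m(\xi_2)m(\xi_3)\bigr]|\xi_1+\xi_2|^{-(n-2)}$ is missing the outer-frequency factor $|\xi_1|$ (in the paper's notation). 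What must actually be bounded is $\|\langle\nabla\rangle\mathcal N_{bad}\|_{L^1_tL^2_x}\lesssim N^{-1+}Z_I(J)^3$, which is what the paper does: its symbol carries the extra $|\xi_1|$, and the key pointwise bound \eqref{symbol estimate} is $\frac{N}{|\xi_2\xi_3\xi_4|}\,|\xi_1|\bigl|1-\tfrac{m(\xi_1)}{m(\xi_2)m(\xi_3)m(\xi_4)}\bigr|\lesssim 1$. Concretely, after your substitution $\phi_j\to (N_jm(N_j))^{-1}\langle\nabla\rangle IP_{N_j}\phi$, the correct dyadic bound carries an additional factor $N_0\sim\max N_j$; one then checks (exactly as in Cases~1--3 of the paper, using $xm(x)\geq 1$ and $N_1\lesssim N_2$) that the sum still closes with prefactor $N^{-1+}$. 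You should also commit to the pair $(a',b')=(1,2)$ so the output is measured in $L^2_x$ and the sum over the output dyadic block $N_0$ is handled by orthogonality rather than incurring a further loss. With those two repairs the argument coincides with the paper's.
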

\begin{proof}
We rewrite the error term via $\mathcal{N}_{bad} = I((|x|^{-2}\ast\phi\overline{\phi})\phi)- (|x|^{-2}\ast I\phi I\overline{\phi})I\phi$:
\begin{align*}
  \text{Error} &=  \int_J\iint_{\R^n\times\R^n}|I\phi(x,t)|^2\frac{y-x}{|y-x|}\cdot \big(\mathcal{N}_{bad}\nabla \overline{I\phi} - I\phi\nabla\overline{\mathcal{N}_{bad}}\big)(y,t) dxdydt \\
  & + \int_J\iint_{\bbr^n\times\bbr^n} |\mbox{Im}\,(\mathcal N_{bad}\overline{I\phi}(t, y)) \nabla(I\phi(t, x)) I\phi(t,x)|dxdydt \\
  & \leq \int_J\int_{\R^n} |\mathcal{N}_{bad}|\cdot|\nabla I\phi|dydt\|I\phi\|^2_{L^\infty_JL^2_x}  +
  \int_J\int_{\R^n} |\nabla\mathcal{N}_{bad}|\cdot|I\phi|dydt\|I\phi\|^2_{L^\infty_JL^2_x}  \\
  & + \| \mathcal{N}_{bad}\|_{L^1_JL^2_x}\| I\phi\|^2_{L^\infty_JL^2_x}\| \langle\nabla\rangle I\phi\|_{L^\infty_JL^2_x} \\
  &\lesssim  \|\langle\nabla\rangle\mathcal{N}_{bad}\|_{L^1_JL^2_y}\|\langle\nabla\rangle I\phi\|_{L^\infty_JL^2_y}\|I\phi\|^2_{L^\infty_JL^2_x} \\
  &\lesssim \|\langle\nabla\rangle\big[I((|y|^{-2}\ast\phi\overline{\phi})\phi)- (|y|^{-2}\ast I\phi I\overline{\phi})I\phi \big]\|_{L^1_tL^2_y}(Z_I(J))^3
\end{align*}
We reduce to show
\begin{equation}\label{3.18}
\|\langle\nabla\rangle\big[I((|x|^{-2}\ast\phi\overline{\phi})\phi)- (|x|^{-2}\ast I\phi I\overline{\phi})I\phi \big]\|_{L^1_tL^2_x} \lesssim \frac{1}{N^{1-}}(Z_I(J))^3.
\end{equation}
By Plancerel theorem in space, we have
\begin{align*}
\mathcal{F}\,&\nabla\big[I((|x|^{-2}\ast\phi\overline{\phi})\phi)- (|x|^{-2}\ast I\phi I\overline{\phi})I\phi  \big](-\xi_1)  \\
& = \int_{\sum_{i=1}^4\xi_i=0} i\xi_1\big[\frac{m(\xi_1)-m(\xi_2)m(\xi_3)m(\xi_4)}{m(\xi_2)m(\xi_3)m(\xi_4)}\big]
\widehat{I\phi}(\xi_2)\widehat{I\phi}(\xi_3)\widehat{I\phi}(\xi_4)|\xi_2+\xi_3|^{-(n-2)}d\xi_2d\xi_3d\xi_4,
\end{align*}
where we ignored complex conjugates since they don't make any differences. As we did in Section~\ref{s-4}, we decompose $\phi$ into a sum of dyadic pieces. It is reduced to show
\begin{align*}
&\sum_{N_2,N_3,N_4} \big\|\int_{\xi_i\sim N_i,i=2,3,4}\sigma(\xi_2,\xi_3,\xi_4)\widehat{I\phi}(\xi_2)\widehat{I\phi}(\xi_3)\widehat{I\phi}(\xi_4)|\xi_2+\xi_3|^{-(n-2)}d\xi_2d\xi_3d\xi_4    \big\|_{L^1_tL^2_{\xi_1}}  \\
&\lesssim \sum_{N_2,N_3,N_4} \big\|\int_{\xi_i\sim N_i,i=2,3,4}\frac{1}{\xi_2\xi_3\xi_4}\sigma(\xi_2,\xi_3,\xi_4)\widehat{\nabla I\phi}(\xi_2)\widehat{\nabla I\phi}(\xi_3)\widehat{\nabla I\phi}(\xi_4)|\xi_2+\xi_3|^{-(n-2)}d\xi_2d\xi_3d\xi_4    \big\|_{L^1_tL^2_{\xi_1}}                      \\
&\lesssim \frac{1}{N^{1-}} (Z_I(T))^3
\end{align*}
where $$ \sigma(\xi_2,\xi_3,\xi_4) = |\xi_1| \frac{m(\xi_1)- m(\xi_2)m(\xi_3)m(\xi_4)}{m(\xi_2)m(\xi_3)m(\xi_4)} $$

We use Proposition~\ref{multiplier}. Note that the exponent numerology
$ 3\cdot\frac{3n-4}{6n} = \frac 12 + 1 -\frac 2n$ and that $(3,\frac{6n}{3n-4})$ is admissible. Thus, once we show that
\begin{equation}\label{symbol estimate}
\frac{N}{|\xi_2\xi_3\xi_4|}\sigma(\xi_2,\xi_3,\xi_4) \lesssim 1,
\end{equation}
then we have
\begin{align*}
\big\|\int_{\xi_i\sim N_i,i=2,3,4}&\frac{1}{\xi_2\xi_3\xi_4}
\sigma(\xi_2,\xi_3,\xi_4)\widehat{\nabla I\phi}(\xi_2)\widehat{\nabla I\phi}(\xi_2)\widehat{\nabla I\phi}(\xi_2)|\xi_2+\xi_3|^{-(n-2)}d\xi_2d\xi_3d\xi_4    \big\|_{L^1_tL^2_{\xi_1}} \\
&\lesssim \frac 1N (Z_I(T))^3.
\end{align*}

The proof of \eqref{symbol estimate} is very similar to the proof of Proposition~\ref{ACL}. So, a sketch is enough. We assume $N_2\geq N_3\geq N_4$ by symmetry and consider the following cases. \\
\textbf{Case 1:} $N \gg N_2$. The symbol is identically zero. \\
\textbf{Case 2:} $N_2 \geq N \gg N_3 \geq N_4$. Since $\sum_i\xi_i = 0$, we have $N_1 \sim N_2$. By the mean value theorem, we estimate
\begin{align*}
\Big|1- \frac{m(\xi_1)}{m(\xi_2)m(\xi_3)m(\xi_4)} \Big| &= \Big| \frac{m(\xi_2)-m(\xi_2+\xi_3+\xi_4)}{m(\xi_2)}  \Big| \\
 & \lesssim \frac{|\nabla m(\xi_2)\cdot (\xi_3+\xi_4)|}{m(\xi_2)} \lesssim \frac{N_3}{N_2}.
\end{align*}
Thus,
$$ \frac{N}{\xi_2\xi_3\xi_4}\sigma(\xi_2,\xi_3,\xi_4) \lesssim \frac{N}{N_2N_3N_4}N_1\frac{N_3}{N_2} \lesssim 1 $$

\textbf{Case 3:} $N_2\geq N_3\gtrsim N$. In this case we need to consider two subcases $N_1\sim N_2$ and $N_2 \gg N_1$ due to $\sum_i\xi_i = 0$. \\
For the first case, $N_1\sim N_2$, we estimate
\begin{align*}
  \frac{NN_1}{N_2N_3N_4}\Big|1- \frac{m(\xi_1)}{m(\xi_2)m(\xi_3)m(\xi_4)}  \Big| &\lesssim \frac{N}{N_3m(\xi_3)N_4m(\xi_4)}\\
   = \frac{N}{N_3(\frac{N}{N_3})^{1-s}N_4m(\xi_4)} &\sim \frac {N^s}{N_3^s}\cdot\frac{1}{N}\frac{1}{N_4m(\xi_4)} \\
  & \lesssim 1,
\end{align*}
where used $x m(x) \geq 1$ for $x \geq 1$. \\
In the second case, $N_2 \gg N_1$, again by $\sum_i \xi_i =0$, we have $N_2\sim N_3$.
\begin{align*}
\Big|1- \frac{m(\xi_2+\xi_3+\xi_4)}{m(\xi_2)m(\xi_3)m(\xi_4)} \Big|\frac{NN_1}{N_2N_3N_4} &\lesssim \frac{m(\xi_1)}{m(\xi_2)^2m(\xi_4)}\frac{NN_1}{N_2N_3N_4} \\
 &\sim N_1m(\xi_1)\frac{1}{N^2}\frac{N^{2s}}{N_2^{2s}}\cdot \frac{N}{N_4m(\xi_4)}
\end{align*}
For our purpose we want to show
$$ \Big(\frac{N}{N_2}\Big)^{2s} N_1m(\xi_1)\frac 1N \lesssim 1. $$
If $N_1 \leq N$, then $m(\xi_1) =1$ and
$$        \Big(\frac{N}{N_2}\Big)^{2s} \frac{N_1}{N} \lesssim 1.   $$
If $N_1 \gtrsim N $, then
$$ \Big(\frac{N}{N_2}\Big)^{2s} N_1m(\xi_1)\frac 1N = \frac{N^{s}}{N_2^s}\cdot \Big(\frac{N_1}{N_2}\Big)^s \lesssim 1. $$
\end{proof}

\section{\textbf{Proof of Main Theorem}}\label{sec-5}
We combine the interaction Morawetz estimate and Propotion 4.1 with a scaling argument
to prove the following statement
giving a uniform bound in terms of the $H^s$-norm of the initial data.
\begin{Prop}
 Suppose $\phi(x,t)$ is a global in time solution to \eqref{SP} from data $\phi_0\in C_0^{\infty}(\bbr^n)$.
 Then for a given large $T$  we have
 \begin{align}
  \|\phi(T)\|_{H^s} \lesssim_{\|\phi_0\|_{H^s}} T^{\alpha(s,n)}
 \end{align}
 as long as $ \frac{2(n-2)}{(3n-4)} <s<1$. The positive number $\alpha(s,n)$ depends on $s$ and $n$.
\end{Prop}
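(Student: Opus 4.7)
The proof is the standard I-method bootstrap combined with mass-critical rescaling: rescale the data so that the modified energy is of unit size, use the almost interaction Morawetz inequality to count local existence intervals, and use the almost conservation law to control the energy growth across them.

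I would begin by passing to the scaled solution $\phi^\lambda(x,t) := \lambda^{-n/2}\phi(x/\lambda, t/\lambda^2)$, which solves \eqref{SP} and preserves $\|\phi_0\|_{L^2}$. Using \eqref{Iphi} together with the $\dot H^s$ scaling identity $\|\phi_0^\lambda\|_{\dot H^s} = \lambda^{-s}\|\phi_0\|_{\dot H^s}$, the choice $\lambda \sim N^{(1-s)/s}$, with constants depending on $\|\phi_0\|_{H^s}$, guarantees $E(I\phi_0^\lambda) \leq \tfrac12 C_0$ for some absolute $C_0$. It therefore suffices to prove $\|\langle\nabla\rangle I\phi^\lambda(T^*)\|_{L^2}\lesssim 1$ on the rescaled interval $[0,T^*]$ with $T^* := \lambda^2 T$.

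Next I would run a continuity argument on
\[ \Omega := \{t \in [0,T^*] :\ \sup_{\tau \in [0,t]} E(I\phi^\lambda)(\tau) \leq C_0\}. \]
For $t \in \Omega$, the running hypothesis yields $\|I\phi^\lambda\|_{L_t^\infty L^2}\lesssim 1$ (mass) and $\|I\phi^\lambda\|_{L_t^\infty \dot H^{1/2}}\lesssim 1$, so the almost interaction Morawetz inequality \eqref{Error}, with the error term absorbed via Lemma \ref{almost mora}, gives
\[ \|I\phi^\lambda\|_{\pair([0,t]\times\bbr^n)}^p \lesssim t^{(n-2)/n},\qquad p := \tfrac{4(n-1)}{n}. \]
Partition $[0,t] = \bigcup_{k=1}^K J_k$ into consecutive subintervals with $\|I\phi^\lambda\|_{\pair(J_k\times\bbr^n)} = \delta$, with $\delta$ from Lemma \ref{lem4.1}; additivity of $\|\cdot\|_{\pair}^p$ forces $K\delta^p \lesssim (T^*)^{(n-2)/n}$. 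On each $J_k$, Lemma \ref{lem4.1} gives $Z_I(J_k)\lesssim 1$, so Proposition \ref{ACL} yields $|E(I\phi^\lambda)(t_{k+1})- E(I\phi^\lambda)(t_k)|\lesssim N^{-1+}$. Summing,
\[ \sup_{\tau \in [0,t]}|E(I\phi^\lambda)(\tau) - E(I\phi_0^\lambda)| \lesssim K N^{-1+} \lesssim (T^*)^{(n-2)/n}N^{-1+}. \]

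To close the bootstrap I require the right-hand side to be $\leq \tfrac12 C_0$. Substituting $T^* \sim N^{2(1-s)/s} T$, this reduces to
\[ T \lesssim N^{\beta -},\qquad \beta := \frac{n}{n-2} - \frac{2(1-s)}{s} = \frac{s(3n-4) - 2(n-2)}{s(n-2)}, \]
whose exponent $\beta$ is strictly positive precisely when $s > 2(n-2)/(3n-4)$. Taking $N = N(T)$ to be the minimal power of $T$ that makes this valid forces $\Omega = [0,T^*]$, hence $\|I\phi^\lambda(T^*)\|_{H^1}\lesssim 1$. Undoing the scaling,
\[ \|\phi(T)\|_{\dot H^s} = \lambda^s \|\phi^\lambda(T^*)\|_{\dot H^s} \lesssim \lambda^s \lesssim N^{1-s} \lesssim T^{(1-s)/\beta +}, \]
which together with $\|\phi(T)\|_{L^2} = \|\phi_0\|_{L^2}$ gives the claimed polynomial bound with $\alpha(s,n) = (1-s)/\beta$. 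The main obstacle is exactly this exponent book-keeping: the threshold $s > 2(n-2)/(3n-4)$ is dictated by the three-way balance among the scaling cost $\lambda^2 \sim N^{2(1-s)/s}$, the Morawetz growth $(T^*)^{(n-2)/n}$, and the per-step energy increment $N^{-1+}$. The secondary technical point is keeping the Morawetz error in \eqref{Error} subordinate along the whole bootstrap, which is precisely what Lemma \ref{almost mora} delivers under the hypothesis $Z_I\lesssim 1$ that the continuity argument maintains.
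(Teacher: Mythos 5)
Your overall structure matches the paper's: same rescaling with $\lambda \sim N^{(1-s)/s}$ so that $E(I\phi_0^\lambda)\lesssim 1$, same subdivision into local-theory intervals, same use of Proposition~\ref{ACL} and the almost Morawetz estimate, and the exponent $\alpha(s,n)=\frac{(n-2)s(1-s)}{s(3n-4)-2(n-2)}$ comes out identically. The difference is the choice of bootstrap quantity, and as written your version has a circularity that the paper's does not.

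You define $\Omega$ purely by the modified-energy bound $E(I\phi^\lambda)\le C_0$, and then claim that on $\Omega$ the almost Morawetz estimate ``with the error term absorbed via Lemma~\ref{almost mora}'' gives $\|I\phi^\lambda\|_{\pair([0,t])}^p\lesssim t^{(n-2)/n}$, from which you read off $K$. But Lemma~\ref{almost mora} controls the error \emph{per local-theory interval} $J_k$ (assuming $\|I\phi^\lambda\|_{\pair(J_k)}\le\delta$), so the accumulated error in the Morawetz inequality over $[0,t]$ scales like $K N^{-1+}$. To absorb that you need to know $K$ is at most $\sim N^{1-}$, but your only way to bound $K$ is the very Morawetz inequality whose error you are trying to absorb. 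The paper breaks this loop by bootstrapping on the Morawetz norm itself: the set $S$ is defined by $\|I\phi^\lambda\|_{\pair([0,t])}\le K t^{(n-2)/(4(n-1))}$, and if $S\neq[0,\lambda^2 T_0]$ one extracts a time $T$ satisfying \eqref{boot}. The upper half of \eqref{boot} bounds the number of subintervals $L$ \emph{before} any Morawetz estimate is invoked; then the almost conservation law controls the energy, then $\int_0^T\text{Error}\lesssim L N^{-1+}\lesssim 1$, and only then does the Morawetz estimate produce $\|I\phi^\lambda\|_{\pair}^{4(n-1)/n}\lesssim T^{(n-2)/n}$, contradicting the lower half of \eqref{boot} for $K$ large. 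To repair your version you should either bootstrap on the Morawetz norm as the paper does, or run a double bootstrap by building a Morawetz-norm hypothesis $\|I\phi^\lambda\|_{\pair([0,\tau])}\le 2C_1\tau^{(n-2)/(4(n-1))}$ into the definition of $\Omega$ alongside the energy bound; either way $K$ is supplied by the hypothesis, not by the conclusion. The remaining numerology — the balance of $\lambda^2\sim N^{2(1-s)/s}$, $(T^*)^{(n-2)/n}$, and $N^{-1+}$, and the resulting threshold $s>\frac{2(n-2)}{3n-4}$ — is correct and identical to the paper's.
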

\begin{remark}
Since $T$ is arbitrarily large, the a priori bound on the $H^s$ norm gives the global well-posedness in the range of
$ \frac{2(n-2)}{(3n-4)}<s<1$.
\end{remark}
\begin{proof}
The equation \eqref{SP} is invariant over scaling of
 \[\phi^{\la}(x,t)\equiv \la^{-\frac n2} \phi(\frac{x}{\la}, \frac{t}{\la^2}).\]
  According to
 \[ \| \nabla I\phi_0^ {\la}\|^2_{L^2(\bbr^n)}
 \lesssim \left( N^{1-s} \la^{ -s}\|\phi_0\|_{H^s (\bbr^n)}\right)^2,\]
 we choose $\la$ as
\begin{equation}\label{choice}
\la \approx N^{\frac{1-s}{s}}.
\end{equation}
in order to normalize $ \| \nabla I\phi_0^ {\la}\|^2_{L^2(\bbr^n)}\lesssim O(1).$
The second term of  the modified energy $E(I\phi_0^{\la})$ is treated as follows,
\begin{align*}
 \|\too\ast |I\phi_0^{\la}|^2 |I\phi_0^{\la}|^2 \|_{L^1(\bbr^n)}
  & \lesssim \|\too\ast |I\phi_0^{\la}|^2 \|_{L^{n} (\bbr^n)}
 \|I\phi_0^{\la}\|_{L^{\frac{2n}{n-1}} (\bbr^n)}^2 \\
 & \lesssim \|I\phi_0^{\la}\|_{L^{\frac{2n}{n-1}} (\bbr^n)}^4
\lesssim \|I\phi_0^{\la}\|_{L^2(\bbr^n)}^2\|I\phi_0^{\la}\|^2_{L^{\frac{2n}{n-2}}(\bbr^n)}\\
&\lesssim   \|I\phi_0\|_{\dot{H}^{1}(\bbr^n)}^2 \quad \mbox{ by  Sobolev embedding.}
\end{align*}
Hence we have $E(I\phi_0^{\la})\lesssim 1$.
The remaining proof is similar to the proof of Theorem $5.1$ in \cite{CGT1} with necessary modification on exponents.
As we have already seen, Hartree type nonlinearity behaves smoother than the polynomial type $|\phi|^{\frac 4n}\phi$.

Let us pick a time $T_0$ arbitrarily large, and let us define
\[S:= \{ 0<t\le \la^2 T_0: \|I\phi^{\la}\|_{\pair([0,t]\times \bbr^n) } \le K t^{\frac{n-2}{4(n-1)}}\} \]
with $K$, $N$  a constant to be chosen later. We claim that S is the whole interval $[0,\la^2T_0]$. Assuming not, there exists a time $T\in[0,\la^2 T_0)$ so that
\begin{align}\label{boot}
 KT^{\frac{n-2}{4(n-1)}} < \|I\phi^{\la}\|_{\pair([0,T]\times \bbr^n)}<  2KT^{\frac{n-2}{4(n-1)}}
 \end{align} by continuity.

We now split the interval $[0,T]$ into consecutive subintervals $J_k$, $k=1,\cdots,L$ so that
\[\|I\phi^{\la}\|_{\pair(J_k\times \bbr^n)}^{\frac{4(n-1)}{n}} \le \del \]
where $\del$ defined as in Lemma $3.1$. Note that
\[ L \sim \frac{(2K)^{\frac{4(n-1)}{n} } T^{\frac{n-2}{n} }}{\del}\]
due to \eqref{boot}. From Proposition $3.1$ we know that for any $ 0<s<1$
\[ \sup_{[0,T]} E(I\phi^{\la}(t)) \lesssim  E(I\phi_0^{\la})+ LN^{-1+}.\]
Now we fix $N$ such that $LN^{-1+} \lesssim 1$ holds for all $t\in [0,T]$. Since $T<\la^2 T_0$, this is achieved
if provided
\[ \frac{(2K)^{\frac{4(n-1)}{n}} (\la^2 T_0)^{\frac {n-2}{n}}}{\del} \sim N^{1-}.\]
By substituting $ \la = N^{\frac{1-s}{s}}$, the above is equal to
\begin{align}\label{N}
 N^{1- \frac{1-s}{s} \frac{2(n-2)}{n}} \sim \frac{(2K)^{\frac{4(n-1)}{n}}}{\del} T_0^{\frac {n-2}{n}}.
 \end{align}
Thus we choose $N$ as above for arbitrary $T_0$ as long as $\frac {2(n-2)}{3n-4}<s <1$.

On the other hand we have that in Lemma $4.2$,
\begin{align*}\|I\phi^{\la}\|_{\pair([0,T]\times \bbr^n)}^{\frac{4(n-1)}{n} } & \lesssim T^{\frac {n-2}{n}}
(\|I\phi^{\la}\|_{L_t^{\infty}L^2_x(\bbr^n)}^{\frac {4}{n}}\|I\phi^{\la}\|_{L_t^{\infty}\dot{H}_x^{\frac 12}( \bbr^n)}^{\frac {4(n-2)}{n}} +\|I\phi^{\la}\|_{L_t^{\infty}\dot{H}^{\frac 12}_x }^{\frac{2n-6}{2n-3}} )\\
& + T^{\frac {n-2}{n}} \int_0^T \mbox{Error } dt.
\end{align*}
We know that  $\int_{J_k} \mbox{Error } dt \lesssim N^{-1+}$ on each $J_k$. Hence summing up all the ${J_k}'s$, we
find
\[\int_0^T \mbox{Error } dt \lesssim LN^{-1+} \lesssim 1\]
by the choice of $\la$, $N$ as \eqref{choice}, \eqref{N}.
Thus with the trivial bound $\|I\phi\|_{\dot{H}^{\frac 12}}\le \|I\phi\|_{{H}^{1}}$, we have
$\|I\phi^{\la}\|_{\pair([0,T]\times \bbr^n)}^{\frac{4(n-1)}{n}}\lesssim T^{\frac {n-2}{n}}$.
This estimate contradicts \eqref{boot} for a proper choice of $K$.

Therefore, we conclude $S = [0, \la^2 T_0 ]$ and $T_0$ can be arbitrary large.
In addition we also have that for $s > \frac {2(n-2)}{3n-4}$
\[ \| I _N\phi^\la (\la^2 T_0 ) \|_{H_x^1} = O(1), \]
from which we estimate
\begin{align*}
\| \phi (T_0 ) \|_{H^s} & \le \| \phi \|_{L^2} + \la^s \| \phi^\la (\la^2 T_0 ) \|_{\dot H^s} \\
& \lesssim \la^s \| I \phi^\la (\la^2 T_0 ) \|_{H_x^1}
\lesssim \la^s \lesssim N^{1-s} \\
& \lesssim T_0^{\al(s,n)}
\end{align*}
where $\al(s,n)= \frac{(n-2)s(1-s)}{s(3n-4)-2(n-2)}$.
\end{proof}

\end{document}